\documentclass[oneside,english]{amsart}
\usepackage{ae,aecompl}
\usepackage[T1]{fontenc}
\usepackage[latin9]{inputenc}
\usepackage{amsthm}
\usepackage{amssymb}
\usepackage{esint}

\makeatletter
\numberwithin{equation}{section} 
\numberwithin{figure}{section} 
\theoremstyle{plain}
\newtheorem{thm}{Theorem}
  \theoremstyle{plain}
  \newtheorem{prop}[thm]{Proposition}
  \theoremstyle{plain}
  \newtheorem{cor}[thm]{Corollary}

\makeatother

\usepackage{babel}

\begin{document}
\subjclass[2000]{Primary 37A40, 37A35, 60G51; secondary 37A15, 37A50}

\title{Poisson-Pinsker factor and infinite measure preserving group actions}

\author{Emmanuel Roy}

\curraddr{Laboratoire Analyse Géométrie et Applications, UMR 7539, Université
Paris 13, 99 avenue J.B. Clément, F-93430 Villetaneuse, France}

\email{roy@math.univ-paris13.fr}
\begin{abstract}
We solve the question of the existence of a Poisson-Pinsker factor
for conservative ergodic infinite measure preserving action of a countable
amenable group by proving the following dichotomy: either it has totally
positive Poisson entropy (and is of zero type), or it possesses a
Poisson-Pinsker factor. If $G$ is abelian and the entropy positive,
the spectrum is absolutely continuous (Lebesgue countable if $G=\mathbb{Z}$)
on the whole $L^{2}$-space in the first case and in the orthocomplement
of the $L^{2}$-space of the Poisson-Pinsker factor in the second.
\end{abstract}

\keywords{Poisson suspensions, infinite ergodic theory, joinings}

\thanks{This paper has been written during the MSRI program {}``Ergodic
Theory and Additive Combinatorics'' in Berkeley. The author is very
grateful to this institution and to the organizers of this program
for funding his research during this semester.}

\maketitle

\section{Introduction}

Poisson entropy for a $\sigma$-finite measure preserving $\mathbb{Z}$-action
was introduced in \cite{Roy05these} as the Kolmogorov entropy of
its Poisson suspension (see Section \ref{sec:Background-on-Poisson}
for the definition of a Poisson suspension), it is a non-trivial invariant
which coincides with Kolmogorov entropy in the finite measure case
and retains most of its basic features. The definition readily extends
to more general group actions as soon as it is well defined in the
finite measure preserving case. For conservative $\mathbb{Z}$-action,
there exist other definitions of entropy due to Krengel and Parry
(see \cite{Kren67Entropy} and \cite{Par69Entropy}) and it is proved
in \cite{Jan08entropy} that these two entropies equal Poisson entropy
for a large family of systems called {}``quasi-finite'', however
it has been proved recently in \cite{delarueJan09KredifPoi} that
Krengel entropy can be different from Poisson and Parry entropies.
The question of the existence of a Pinsker factor for the Krengel
entropy (a \emph{Krengel-Pinsker factor}) was already addressed (and
unsolved) in the original Krengel's paper. The first partial answer
was due to Aaronson and Park in \cite{AarPark07ent} where they proved
the existence of a Krengel-Pinsker factor for LLB systems (a subfamily
of quasi-finite systems). In \cite{Jan08entropy}, it is proved that
as soon as there exists a ($\sigma$-finite) factor of zero Poisson
entropy, there exists a maximal factor with zero Poisson entropy (a
\emph{Poisson-Pinsker factor}).

The method used to obtain the existence of Poisson-Pinsker factor
was very particular to $\mathbb{Z}$-actions and was only partially
satisfactory since it was not possible to prove that a system generated
by a square integrable function with singular spectral measure had
zero Poisson entropy nor a conservative system with finite spectral
multiplicity, as it is well known in the finite measure case. In this
short paper, we will get, with completely different methods, a much
more satisfactory picture of the situation (extended to countable
amenable group actions) by showing, more generally, that the Pinsker
factor of a Poisson suspension, if not trivial, is always (isomorphic
to) a Poisson suspension.

These results strongly rely on the structure of joinings of Poisson
suspensions (see \cite{Lem05ELF}, \cite{Roy05these} and \cite{Roy07Infinite})
on the one hand, and entropy results for countable amenable group
actions on the other (see \cite{GlaThouWei00EntropywoPast}, \cite{DooGol02specamenable}
and \cite{DanRud07Condent}), in particular the relative disjointness
results of Thouvenot \cite{Thouv75Pinsker} for $\mathbb{Z}$-actions,
generalized in \cite{GlaThouWei00EntropywoPast} for countable amenable
group actions.

\section{\label{sec:Background-on-Poisson}Background on Poisson suspensions}

We recall a few facts about Poisson suspensions that can be found
in \cite{Roy07Infinite}. Note that in this section, we mention structural
features and results that were once proved in the case of $\mathbb{Z}$-actions,
however, it can be seen that the extension to more general actions
is immediate. In the following $G$ is a countable amenable group
with identity element $e$.

\subsection{Definition of Poisson suspensions}

Let $\left(X,\mathcal{A},\mu\right)$ a $\sigma$-finite Lebesgue
space with an infinite continuous measure $\mu$. Let us define the
probability space $\left(X^{*},\mathcal{A}^{*},\mu^{*}\right)$ where:
\begin{itemize}
\item $X^{*}$ stands for the space of measures of the form $\nu=\sum_{n\in\mathbb{N}}\delta_{x_{i}}$
where $x_{i}\in X$.
\item $\mathcal{A}^{*}$ is the smallest $\sigma$-algebra such that, for
any $A\in\mathcal{A}$, the map $N_{A}:\;\nu\mapsto\nu\left(A\right)$
from $\left(X,\mathcal{A}\right)$ to $\left(\overline{\mathbb{N}},\mathcal{P}\left(\overline{\mathbb{N}}\right)\right)$
is measurable.
\item $\mu^{*}$ is the only probability measure on $\left(X^{*},\mathcal{A}^{*}\right)$
such that, for any integer $k$, and any collection of disjoint sets
$\left\{ A_{1},\dots,A_{k}\right\} $ of finite measure, the random
variables $N_{A_{1}},\dots,N_{A_{k}}$\emph{ }are independent and
follow a Poisson distribution with parameter $\mu\left(A_{1}\right),\dots,\mu\left(A_{k}\right)$
respectively.
\end{itemize}
From now on, $\mathcal{A}^{*}$ is assumed complete with respect to
$\mu^{*}$, the probability space $\left(X^{*},\mathcal{A}^{*},\mu^{*}\right)$
is a Lebesgue space called the \emph{Poisson measure} over $\left(X,\mathcal{A},\mu\right)$.

Let $G$ be a countable group acting on $\left(X,\mathcal{A},\mu\right)$
by measure preserving automorphisms $T^{g}$, $g\in G$. If $T$ is
a measure preserving automorphism of $\left(X,\mathcal{A},\mu\right)$,
then $T_{*}$ defined on $\left(X^{*},\mathcal{A}^{*}\right)$ by
$T_{*}\left(\nu\right)\left(A\right)=\nu\left(T^{-1}A\right)$, $A\in\mathcal{A}$,
is a measure preserving automorphism of $\left(X^{*},\mathcal{A}^{*},\mu^{*}\right)$.
$\left(X^{*},\mathcal{A}^{*},\mu^{*},T_{*}^{g}\right)$ is called
the \emph{Poisson suspension} over the \emph{base} $\left(X,\mathcal{A},\mu,T^{g}\right)$.

\subsection{Poisson entropy}

The \emph{Poisson entropy} of a system $\left(X,\mathcal{A},\mu,T^{g}\right)$
is defined as the usual (Kolmogorov) entropy of its Poisson suspension
$\left(X^{*},\mathcal{A}^{*},\mu^{*},T_{*}^{g}\right)$. $\left(X,\mathcal{A},\mu,T^{g}\right)$
will be said to have \emph{totally positive Poisson entropy} if, for
any invariant set $K\subset X$ of positive measure the Poisson entropy
of the restricted system $\left(K,\mathcal{A}_{\mid K},\mu_{\mid K},T^{g}\right)$
is positive on ANY factor. Note that we reserve the terminology \emph{factor}
to invariant sub-$\sigma$-algebras on which the measure is still
$\sigma$-finite (the trivial $\sigma$-algebra $\left\{ X,\emptyset\right\} \subset\mathcal{A}$
is NOT a factor of $\left(X,\mathcal{A},\mu,T^{g}\right)$), therefore,
\emph{complete positive entropy} (cpe) is a different notion which
is not adapted to the infinite measure context. Of course a probability
measure preserving system never has totally positive entropy since
the trivial algebra is a factor.

\subsection{\label{sub:Fock-space-structure}Fock space structure}

It is classical to see $L^{2}\left(\mathcal{A}^{*}\right)$ as the
Fock space over $L^{2}\left(\mathcal{A}\right)$, that is:

\[
L^{2}\left(\mathcal{A}^{*}\right)\simeq\mathbb{C}\oplus L^{2}\left(\mathcal{A}\right)\oplus L^{2}\left(\mathcal{A}\right)^{\odot2}\otimes\cdots\otimes L^{2}\left(\mathcal{A}\right)^{\odot n}\otimes\cdots\]
where $L^{2}\left(\mathcal{A}\right)^{\odot n}$ stands for the symmetric
tensor product of $L^{2}\left(\mathcal{A}\right)$ and is called the
\emph{$n$-th chaos}. Within $L^{2}\left(\mathcal{A}^{*}\right)$,
the subspace correcponding to the first chaos is noted $\mathfrak{C}$
and is the closure of the linear span of vectors $N\left(A\right)-\mu\left(A\right)$,
$A\in\mathcal{A}$, $\mu\left(A\right)<\infty$. A linear operator
$U$ on $L^{2}\left(\mathcal{A}\right)$ of norm less than $1$ induces
an operator $\widetilde{U}$ on $L^{2}\left(\mathcal{A}^{*}\right)$
when defined on $L^{2}\left(\mathcal{A}\right)^{\odot n}$ by $\widetilde{U}\left(f\otimes\cdots\otimes f\right)=Uf\otimes\cdots\otimes Uf$.
We have in particular $\widetilde{U_{T^{g}}}=U_{T_{*}^{g}}$.

There is a distinguished family of vectors, linearly dense in $L^{2}\left(\mathcal{A}^{*}\right)$;
namely, for $f\in L^{2}\left(\mathcal{A}\right)$ which are finite
linear combination of indicator functions, \[
\mathcal{E}_{f}:=1\oplus f\oplus\frac{1}{\sqrt{2!}}f\otimes f\oplus\dots\oplus\frac{1}{\sqrt{n!}}f\otimes\cdots\otimes f,\]
 which corresponds, through the above identification, to:\[
\mathcal{E}_{f}\left(\nu\right)=\exp\left(-\int_{X}fd\mu\right){\displaystyle \prod_{x,\nu\left(x\right)=1}}\left(1+f\left(x\right)\right),\quad\nu\in X^{*}.\]

In case $G$ is abelian, if $\sigma$ is the maximal spectral type
of $U_{T^{g}}$ on $L^{2}\left(\mathcal{A}\right)$ then the reduced
maximal spectral type of $U_{T_{*}^{g}}$ is $\sum_{k=1}^{\infty}\frac{1}{k!}\sigma^{*k}$
since the maximal spectral type on the $k$-th chaos is $\sigma^{*k}$.

\subsection{Poissonian factors}

A \emph{Poissonian factor} is a sub-$\sigma$-algebra of $\mathcal{A}^{*}$
of the form $\left(\mathcal{B}_{\mid K}\right)^{*}:=\sigma\left\{ N_{A},\; A\in\mathcal{B}_{\mid K}\right\} $
where $K\subset X$ is a $T^{g}$-invariant measurable set of positive
$\mu$-measure and $\mathcal{B}_{\mid K}$ is a factor of the restricted
system $\left(K,\mathcal{A}_{\mid K},\mu_{\mid K},T^{g}\right)$.
In terms of systems, the factor $\left(\mathcal{B}_{\mid K}\right)^{*}$
corresponds to the Poisson suspension $\left(K^{*},\left(\mathcal{B}_{\mid K}\right)^{*},\left(\mu_{\mid K}\right)^{*},T_{*}^{g}\right)$.
The trivial factor $\left\{ X^{*},\emptyset\right\} \subset\mathcal{A}^{*}$
is also considered as Poissonian.

\subsection{Infinite divisibility and Poissonian joinings}

Addition is well defined on $\left(X^{*},\mathcal{A}^{*}\right)$
as the usual sum of measures and so is convolution of probability
measures on $\left(X^{*},\mathcal{A}^{*}\right)$. A probability measure
$p$ on $\left(X^{*},\mathcal{A}^{*}\right)$ such that, for any integer
$k$, there exists a probability measure $p_{k}$ satisfying $p=p_{k}*\cdots*p_{k}$
($k$ terms) is said to be \emph{infinitely divisible}. It is well
known that $\mu^{*}$ is infinitely divisible as $\mu^{*}=\left(\frac{1}{k}\mu\right)^{*}*\cdots*\left(\frac{1}{k}\mu\right)^{*}$.

Addition on the product space $\left(X^{*}\times X^{*},\mathcal{A}^{*}\otimes\mathcal{A}^{*}\right)$
is defined coordinate wise and so is convolution and infinite divisibility.
A self-joining of a Poisson suspension $\left(X^{*},\mathcal{A}^{*},\mu^{*},T_{*}^{g}\right)$
is said to be a \emph{Poissonian self-joining} if its distribution
on $\left(X^{*}\times X^{*},\mathcal{A}^{*}\otimes\mathcal{A}^{*}\right)$
is infinitely divisible. In \cite{Roy07Infinite}, it is proved:
\begin{prop}
A self-joining, determined by a Markov operator $\Psi$ on $L^{2}\left(\mathcal{A}^{*}\right)$,
is a Poissonian self-joining if and only if there exists a sub-Markov
operator $\Phi$ on $L^{2}\left(\mathcal{A}\right)$ (i.e. a positive
operator such that $\Phi\left(1\right)\leq1$ and $\Phi^{*}\left(1\right)\leq1$
) such that $\widetilde{\Phi}=\Psi$ (see \cite{Lem05ELF} and \cite{Roy07Infinite}).
\end{prop}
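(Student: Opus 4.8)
The plan is to prove the stated characterization of Poissonian self-joinings by relating the ``multiplicative'' structure of infinite divisibility on $\left(X^{*}\times X^{*},\mathcal{A}^{*}\otimes\mathcal{A}^{*}\right)$ to the additive (Fock space) structure on the first chaos. First I would set up the correspondence between self-joinings and Markov operators in the standard way: a self-joining $\lambda$ of $\left(X^{*},\mathcal{A}^{*},\mu^{*},T_{*}^{g}\right)$ with itself corresponds to a Markov operator $\Psi$ on $L^{2}\left(\mathcal{A}^{*}\right)$ commuting with $U_{T_{*}^{g}}$, via $\int f\otimes g\, d\lambda=\langle \Psi f,\bar g\rangle_{L^{2}(\mu^{*})}$ for $f,g\in L^{\infty}$, and conversely. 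The key computational tool is the family $\mathcal{E}_{f}$: since these vectors are linearly dense in $L^{2}\left(\mathcal{A}^{*}\right)$ and satisfy the multiplicative identity $\mathcal{E}_{f}\cdot\mathcal{E}_{g}=\mathcal{E}_{f+g+fg}$ (equivalently $\mathcal{E}_{f}(\nu+\nu')=\mathcal{E}_f(\nu)\mathcal{E}_f(\nu')$ pointwise), they linearize convolution: for probability measures $p,q$ on $X^{*}$ one has $\int \mathcal{E}_f\, d(p*q)=\left(\int \mathcal{E}_f\, dp\right)\left(\int \mathcal{E}_f\, dq\right)$. Thus infinite divisibility of $\lambda$ on the product space translates, after applying $\lambda$ to product vectors $\mathcal{E}_f\otimes \mathcal{E}_h$, into a factorization property of the function $(f,h)\mapsto \int \mathcal{E}_f\otimes \mathcal{E}_h\, d\lambda$.

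Next I would carry out the ``only if'' direction. Assume $\lambda$ is infinitely divisible, with $\lambda=\lambda_k*\cdots*\lambda_k$. Evaluating against $\mathcal{E}_f\otimes\mathcal{E}_h$ and using the multiplicativity above shows that $L(f,h):=\int \mathcal{E}_f\otimes\mathcal{E}_h\, d\lambda$ is, for each $k$, a $k$-th power of the analogous quantity for $\lambda_k$; combined with the product structure of $\mu^{*}$ (so that the marginals force the normalization $L(f,0)=\exp\int f\, d\mu$, $L(0,h)=\exp\int h\, d\mu$), a LÚvy--Khinchin type argument identifies $\log L(f,h)$ as a bilinear-plus-linear expression. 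Concretely $\log L(f,h)=\int f\, d\mu+\int h\, d\mu+\langle \Phi f,\bar h\rangle$ for a bounded bilinear form, i.e.\ for some bounded operator $\Phi$ on $L^{2}(\mathcal{A})$; positivity of $\lambda$ forces $\Phi$ to be a positive operator, and the marginal constraints force $\Phi(1)\leq 1$ and $\Phi^{*}(1)\leq 1$, i.e.\ $\Phi$ is sub-Markov. One then checks that the Markov operator determined by this $L$ is exactly $\widetilde{\Phi}$ by comparing their actions on the total family $\{\mathcal{E}_f\}$: indeed $\langle \widetilde{\Phi}\,\mathcal{E}_f,\mathcal{E}_h\rangle=\exp\langle \Phi f, \bar h\rangle$ up to the exponential normalizing factors, which matches. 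Commutation of $\Phi$ with $U_{T^{g}}$ is inherited from commutation of $\Psi$ with $U_{T_{*}^{g}}$ through the functorial identity $\widetilde{U_{T^{g}}}=U_{T_{*}^{g}}$ and the faithfulness of $\Phi\mapsto\widetilde{\Phi}$ on the first chaos.

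For the ``if'' direction, given a sub-Markov $\Phi$ commuting with $U_{T^{g}}$, I would define $\Psi:=\widetilde{\Phi}$, check it is a Markov operator on $L^{2}(\mathcal{A}^{*})$ (positivity is immediate from $\widetilde{\Phi}\,\mathcal{E}_f\geq 0$ when $f\geq 0$ together with density, and $\widetilde{\Phi}(1)=1=\widetilde{\Phi}^{*}(1)$ since the constants sit in the zeroth chaos), hence it defines a self-joining $\lambda_{\Phi}$. To see $\lambda_{\Phi}$ is infinitely divisible, note that $\Phi$ itself need not be decomposed; rather, one uses that $\widetilde{\cdot}$ converts the \emph{additive} decomposition behind infinite divisibility into a product: writing down the characteristic functional of $\lambda_{\Phi}$ against $\mathcal{E}_f\otimes\mathcal{E}_h$ gives precisely $\exp\left(\int f\,d\mu+\int h\,d\mu+\langle\Phi f,\bar h\rangle\right)$, which is visibly an infinitely divisible form (the exponent scales linearly), so the measures $\lambda_k$ obtained by replacing the exponent with its $1/k$ multiple are genuine probability measures on the product and convolve back to $\lambda_{\Phi}$.

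The main obstacle I anticipate is the LÚvy--Khinchin step in the ``only if'' direction: one must rule out a nontrivial ``Gaussian'' (higher-chaos / quadratic) contribution and a nontrivial purely atomic part concentrated off the diagonal structure, i.e.\ show that infinite divisibility of a \emph{joining} of two \emph{Poisson} measures forces the exponent to be exactly linear in each of $f$ and $h$ separately with a bilinear cross term, and nothing more. This is where the specific structure of $\mu^{*}$ as a Poisson measure (its own LÚvy measure being $\mu$, with no Gaussian part) must be used, presumably by exploiting that the marginals of $\lambda$ are $\mu^{*}$ and invoking uniqueness in the LÚvy--Khinchin representation on the Polish space $X^{*}$; alternatively one reduces to finite-dimensional marginals $N_{A_1},\dots,N_{A_k}$ and uses the classical fact that an infinitely divisible $\mathbb{Z}_{+}^{k}$-valued vector with Poisson marginals and nonnegative correlations is a sum of independent Poisson ``pairings'', then passes to the limit. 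I would present the reduction to finite dimensions as the cleanest route and cite \cite{Roy07Infinite} and \cite{Lem05ELF} for the detailed LÚvy--Khinchin bookkeeping.
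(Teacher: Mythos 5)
You should first note that the paper offers no proof of this proposition at all: it is quoted as a result established in \cite{Roy07Infinite} (with the Markov-operator formulation coming from \cite{Lem05ELF}), so your attempt has to be measured against those proofs rather than against anything in this text. Your outline does point in the right direction: testing the joining against the exponential vectors $\mathcal{E}_{f}\otimes\mathcal{E}_{h}$ and reading the correspondence $\Psi=\widetilde{\Phi}$ off the resulting functional is indeed how the identification is checked. But the decisive step of the ``only if'' direction is precisely the one you defer (``presumably'', ``cite for the bookkeeping''). For integer-valued random measures there is in fact no Gaussian part to exclude; the real content is the identification of the L\'evy (KLM) measure $Q$ of the infinitely divisible law on $X^{*}\times X^{*}$: since both marginals are $\mu^{*}$, whose L\'evy measure is the image of $\mu$ under $x\mapsto\delta_{x}$, $Q$ must be carried by configurations of the form $\left(\delta_{x},\delta_{y}\right)$, $\left(\delta_{x},0\right)$, $\left(0,\delta_{y}\right)$; this produces a measure $m$ on $X\times X$ whose two marginals are dominated by $\mu$, and $\Phi$ is the sub-Markov operator with kernel $m$. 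Your proposed reduction to finite-dimensional infinitely divisible vectors with Poisson marginals is a plausible substitute for this, but as written it is an appeal to the literature, not a proof, so the heart of the statement remains unproved in your sketch.

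Two further points need repair. First, the multiplicative identities you invoke hold only up to normalizing constants: $\mathcal{E}_{f}\mathcal{E}_{g}=e^{\int fg\,d\mu}\,\mathcal{E}_{f+g+fg}$, and the convolution identity for $\int\mathcal{E}_{f}\,d\left(p*q\right)$ picks up a factor $e^{\int f\,d\mu}$; harmless, but they must be tracked or the characteristic-functional computation comes out wrong. Second, and more seriously, in the ``if'' direction positivity of $\widetilde{\Phi}$ does not follow from $\widetilde{\Phi}\mathcal{E}_{f}\geq0$ for $f\geq0$ together with $L^{2}$-density: $L^{2}$ limits do not transport pointwise positivity from a total family to the whole positive cone. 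Likewise the existence of genuine convolution $k$-th roots is asserted (``visibly infinitely divisible''), not shown. Both issues are settled at once by the explicit construction underlying the cited proofs: disintegrate $\Phi$ as a kernel $m$ on $X\times X$ with marginals $m_{1},m_{2}\leq\mu$, and realize the joining as the image of the Poisson measure over $\left(X\times X\right)\sqcup X\sqcup X$ with intensity $m\oplus\left(\mu-m_{1}\right)\oplus\left(\mu-m_{2}\right)$ under the natural superposition map; this exhibits a bona fide self-joining whose Markov operator is $\widetilde{\Phi}$ (so positivity is automatic) and whose $k$-th convolution roots are obtained by scaling the intensity by $1/k$. Without this (or an equivalent) construction, both directions of your argument have gaps.
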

Poissonian self-joinings were originally introduced independently
through their Markov operator characterization in \cite{Lem05ELF},
and their infinitely divisible one in \cite{Roy05these}.

The next proposition is found in \cite{Roy07Infinite}, however, we
give a proof as the original one is too sketchy:
\begin{prop}
\label{pro:Poissonfactor} If $\mathcal{F}$ is a non-trivial factor
of $\left(X^{*},\mathcal{A}^{*},\mu^{*},T_{*}^{g}\right)$ and if
the relatively independent product of $\left(X^{*},\mathcal{A}^{*},\mu^{*},T_{*}^{g}\right)$
over $\mathcal{F}$ is a Poissonian joining, then $\mathcal{F}$ is
a Poissonian factor, that is there exists a $T^{g}$-invariant subset
$K\subset X$ and a factor $\mathcal{B}_{\mid K}\subset\mathcal{A}_{\mid K}$
such that $\left(\mathcal{B}_{\mid K}\right)^{*}=\mathcal{F}$.\end{prop}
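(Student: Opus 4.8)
The plan is to exploit the Markov-operator description of joinings together with the infinite-divisibility characterization from the preceding proposition. Let $\Psi$ be the Markov operator on $L^{2}\left(\mathcal{A}^{*}\right)$ that corresponds to the relatively independent self-joining of $\left(X^{*},\mathcal{A}^{*},\mu^{*},T_{*}^{g}\right)$ over $\mathcal{F}$; concretely, $\Psi$ is the orthogonal projection $E\left[\,\cdot\mid\mathcal{F}\right]$ viewed as a self-adjoint Markov operator, since the relatively independent product over a factor is exactly the joining whose Markov operator is conditional expectation onto that factor. By hypothesis this self-joining is Poissonian, so by the cited proposition there is a sub-Markov operator $\Phi$ on $L^{2}\left(\mathcal{A}\right)$ with $\widetilde{\Phi}=\Psi$. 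The first step is to record the structural consequences of $\Psi$ being a projection: $\Psi^{2}=\Psi=\Psi^{*}$. I would then transfer these identities to $\Phi$ through the functor $U\mapsto\widetilde{U}$, which is faithful and multiplicative on the relevant class of operators, to conclude that $\Phi$ is itself a self-adjoint idempotent, i.e. an orthogonal projection on $L^{2}\left(\mathcal{A}\right)$ commuting with $U_{T^{g}}$ for every $g$.

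The second step is to identify the range of $\Phi$ with the $L^{2}$-space of a Poissonian factor. Since $\Phi$ is a positive, self-adjoint, $U_{T^{g}}$-equivariant projection with $\Phi\left(1\right)\le 1$, I would argue that $\mathrm{Ran}\,\Phi$ is a closed $T^{g}$-invariant subspace of $L^{2}\left(\mathcal{A}\right)$ that is stable under the lattice operations inherited from the ordering, so that it is of the form $L^{2}\left(\mathcal{B}_{\mid K}\right)$ for some invariant set $K$ (carrying $\mathbf{1}_{K}=\Phi\left(1\right)$, which must be an indicator by positivity and idempotence) and some invariant sub-$\sigma$-algebra $\mathcal{B}_{\mid K}$ of $\mathcal{A}_{\mid K}$; this is the standard correspondence between Markov projections and factor $\sigma$-algebras, adapted to the $\sigma$-finite setting by restricting to $K$. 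The key point here is that a projection which is simultaneously an orthogonal projection and a positive (Markov-type) operator must project onto a sub-$L^{2}$ determined by a sub-$\sigma$-algebra — this is where the positivity of $\Phi$ is essential and where I expect the technical heart of the argument to lie, particularly in handling the set $K$ on which the measure stays $\sigma$-finite.

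The third step is to show $\left(\mathcal{B}_{\mid K}\right)^{*}=\mathcal{F}$. The functor applied to $\Phi=$ projection onto $L^{2}\left(\mathcal{B}_{\mid K}\right)$ yields $\widetilde{\Phi}=$ projection onto the Fock space built over $L^{2}\left(\mathcal{B}_{\mid K}\right)$, which by the Fock-space description in Section \ref{sub:Fock-space-structure} is exactly $L^{2}\left(\left(\mathcal{B}_{\mid K}\right)^{*}\right)$; one sees this on the linearly dense family $\mathcal{E}_{f}$ with $f$ a simple $\mathcal{B}_{\mid K}$-measurable function, noting that $\widetilde{\Phi}\,\mathcal{E}_{f}=\mathcal{E}_{\Phi f}$. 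But $\widetilde{\Phi}=\Psi=E\left[\,\cdot\mid\mathcal{F}\right]$ also has range $L^{2}\left(\mathcal{F}\right)$. Equality of the two ranges gives $L^{2}\left(\mathcal{F}\right)=L^{2}\left(\left(\mathcal{B}_{\mid K}\right)^{*}\right)$, hence $\mathcal{F}=\left(\mathcal{B}_{\mid K}\right)^{*}$ as $\sigma$-algebras (both being complete), which is the claim. The main obstacle, as noted, is the second step: verifying rigorously that a positive orthogonal projection on $L^{2}$ of a $\sigma$-finite space comes from a sub-$\sigma$-algebra after passing to the invariant set $K=\{\Phi\mathbf{1}=1\}$, and checking that $\mathbf{1}_{K}$ is genuinely $T^{g}$-invariant and of positive measure so that $\mathcal{F}$ being non-trivial is not vacuous.
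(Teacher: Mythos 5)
Your proposal follows essentially the same route as the paper's proof: obtain the sub-Markov operator $\Phi$ with $\widetilde{\Phi}=\pi_{\mathcal{F}}$, use the chaos decomposition (faithfulness of $U\mapsto\widetilde{U}$) to see that $\Phi$ is an equivariant orthogonal projection, show $\Phi\left(1\right)$ is an indicator $1_{K}$ so that $\Phi$ becomes a conditional expectation onto a factor $\mathcal{B}_{\mid K}\subset\mathcal{A}_{\mid K}$, and identify $\left(\mathcal{B}_{\mid K}\right)^{*}$ with $\mathcal{F}$ through the exponential vectors $\mathcal{E}_{f}$ and the relation $\widetilde{\Phi}\mathcal{E}_{f}=\mathcal{E}_{\Phi f}$. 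The one step you defer -- that $\Phi\left(1\right)$ must be an indicator -- is exactly the short computation the paper supplies, namely $\int_{X}\Phi\left(1_{A}\right)\left(1-\Phi\left(1\right)\right)d\mu=0$ (using self-adjointness, idempotence, positivity and $\Phi\left(1\right)\leq1$) followed by letting $A$ increase to $X$, so your sketch is correct and matches the paper's argument.
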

\begin{proof}
Let $\pi_{\mathcal{F}}$ the projection on $L^{2}\left(\mathcal{F}\right)$
that corresponds to the relatively independent joining over $\mathcal{F}$.
Since this joining is Poissonian, there exists a sub-Markov $\Phi$
operator on $L^{2}\left(\mathcal{A}\right)$ such that $\widetilde{\Phi}=\pi_{\mathcal{F}}$.
Observe that $\Phi$ is also an orthogonal projection since it is
induced by $\pi_{\mathcal{F}}$ restricted to the first chaos, we
therefore have:\[
\int_{X}\Phi\left(1_{A}\right)d\mu=\int_{X}1_{A}\Phi\left(1\right)d\mu.\]

But we also have\[
\int_{X}\Phi\left(1_{A}\right)d\mu=\int_{X}\Phi\left(\Phi\left(1_{A}\right)\right)d\mu=\int_{X}\Phi\left(1_{A}\right)\Phi\left(1\right)d\mu,\]

and this implies\[
\int_{X}\Phi\left(1_{A}\right)\left(1-\Phi\left(1\right)\right)d\mu=0.\]

Therefore, as $\Phi$ is positive and $\Phi\left(1\right)\leq1$,
$\mu$-a.e.\[
\Phi\left(1_{A}\right)\left(1-\Phi\left(1\right)\right)=0.\]

Replacing $A$ by a sequence $A_{n}$ of finite measure sets increasing
to $X$, we get:\[
\Phi\left(1\right)\left(1-\Phi\left(1\right)\right)=0\]
 and\[
\Phi\left(1\right)=\left(\Phi\left(1\right)\right)^{2}.\]

So $\Phi\left(1\right)$ is the indicator function of a set $K$ and
if we consider the restricted system $\left(K,\mathcal{A}_{\mid K},\mu_{\mid K},T^{g}\right)$,
$\Phi$ is a Markov operator and an orthogonal projection of $L^{2}\left(\mathcal{A}_{\mid K}\right)$,
thus a conditional expectation on a factor $\mathcal{B}_{\mid K}\subset\mathcal{A}_{\mid K}$.
Now if $f\in L^{2}\left(\mathcal{A}\right)$, $\mathcal{E}_{\Phi f}$
is $\left(\mathcal{B}_{\mid K}\right)^{*}$-measurable and, for any
$g\in L^{2}\left(\mathcal{B}_{\mid K}\right)$

\[
\left\langle \pi_{\mathcal{F}}\mathcal{E}_{f},\mathcal{E}_{g}\right\rangle _{L^{2}\left(\mathcal{A}^{*}\right)}=\left\langle \mathcal{E}_{\Phi f},\mathcal{E}_{g}\right\rangle _{L^{2}\left(\mathcal{A}^{*}\right)}=\exp\left\langle \Phi f,g\right\rangle _{L^{2}\left(\mathcal{B}_{\mid K}\right)}\]

\[
=\exp\left\langle f,g\right\rangle _{L^{2}\left(\mathcal{B}_{\mid K}\right)}=\exp\left\langle f,g\right\rangle _{L^{2}\left(\mathcal{A}\right)}=\left\langle \mathcal{E}_{f},\mathcal{E}_{g}\right\rangle _{L^{2}\left(\mathcal{A}^{*}\right)}=\left\langle \pi_{\mathcal{B}_{\mid K}^{*}}\mathcal{E}_{f},\mathcal{E}_{g}\right\rangle _{L^{2}\left(\mathcal{A}^{*}\right)}\]

with some slight abuses in notation. Therefore $\mathcal{F}=\left(\mathcal{B}_{\mid K}\right)^{*}$.
\end{proof}

\section{The main results}
\begin{prop}
\label{prop:Main Result}Let $\left(X^{*},\mathcal{A}^{*},\mu^{*},T_{*}^{g}\right)$
be the Poisson suspension of the infinite measure preserving system
$\left(X,\mathcal{A},\mu,G\right)$. Then $\Pi$, the Pinsker factor
of the system, is a Poissonian factor.\end{prop}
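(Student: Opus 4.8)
The plan is to apply Proposition~\ref{pro:Poissonfactor} to the Pinsker factor $\Pi$. By that proposition, it suffices to show that the relatively independent self-joining $\mu^{*}\times_{\Pi}\mu^{*}$ of the Poisson suspension over its Pinsker factor is a Poissonian joining, i.e. that the Markov operator $\pi_{\Pi}$ implementing this self-joining is of the form $\widetilde{\Phi}$ for some sub-Markov operator $\Phi$ on $L^{2}(\mathcal{A})$. (One must first check $\Pi$ is non-trivial in the sense of the excerpt, i.e. supports a $\sigma$-finite invariant structure; if the Poisson entropy is zero then $\Pi=\mathcal{A}^{*}$ and the statement is immediate, so we may assume positive entropy, whence $\Pi\neq\mathcal{A}^{*}$, and the relevant non-triviality is inherited from the Fock/first-chaos structure.)

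First I would recall the characterization of $\pi_{\Pi}$: the relatively independent joining over the Pinsker factor is precisely the joining realizing \emph{relative disjointness} of a system and its Pinsker complement. The key external input is the Thouvenot relative disjointness theorem \cite{Thouv75Pinsker}, in the amenable-group generality of \cite{GlaThouWei00EntropywoPast}: a zero-entropy extension is relatively disjoint from any CPE (Bernoulli-like) extension over their common factor, and more to the point, for \emph{any} system the relatively independent self-joining over the Pinsker factor is disjoint, relative to $\Pi$, from the Pinsker factor's zero-entropy behavior — so the self-joining $\mu^{*}\times_{\Pi}\mu^{*}$ is supported on the product behaviour dictated by the Fock structure. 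Concretely, I would use that on the Poisson suspension the Pinsker factor $\Pi$ is the $\widetilde{\cdot}$-image of the "Pinsker part at the base level": the zero-entropy behaviour of $T_{*}^{g}$ is governed, through the chaos decomposition, by the spectral/entropy behaviour of $U_{T^{g}}$ on the first chaos $\mathfrak{C}\cong L^{2}(\mathcal{A})$, because the $n$-th chaos is a symmetric tensor power and tensor powers of a positive-entropy factor again have positive entropy.

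The main technical step, then, is to identify $\pi_{\Pi}$ explicitly. I would argue that $\pi_{\Pi}$ commutes with the second quantization functor: since $\Pi$ is $T_{*}^{g}$-invariant and the Fock decomposition is $U_{T_{*}^{g}}$-equivariant with each chaos carrying spectral type $\sigma^{*k}$, the projection onto $L^{2}(\Pi)$ respects the chaos grading. Let $P$ be the orthogonal projection of $L^{2}(\mathcal{A})=\mathfrak{C}$ onto its "zero-entropy part" $\mathfrak{C}_{0}$ — the maximal $U_{T^{g}}$-invariant subspace on which the induced unitary has a zero-entropy (e.g. for $G=\mathbb{Z}$, singular-spectrum) behaviour compatible with $\Pi$. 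The heart of the proof is to show $\pi_{\Pi}=\widetilde{P}$, which amounts to two inclusions: $\widetilde{P}$ factors through $L^{2}(\Pi)$ (so $\Pi\supseteq$ the $\sigma$-algebra generated by $\mathrm{ran}\,\widetilde{P}$ — this direction follows because $\widetilde{P}\mathcal{E}_{f}=\mathcal{E}_{Pf}$ generates a Poissonian sub-$\sigma$-algebra of zero entropy, as its first chaos has zero entropy and higher chaoses are symmetric powers thereof, hence still zero entropy by the amenable Abramov–Rokhlin/tensor-power argument), and conversely $L^{2}(\Pi)\subseteq\mathrm{ran}\,\widetilde{P}$ — this is where Thouvenot relative disjointness enters, forcing any vector orthogonal to $\mathrm{ran}\,\widetilde{P}$ (i.e. living "above the zero-entropy base part") to have positive entropy in $T_{*}^{g}$ and hence to lie outside $L^{2}(\Pi)$.

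The step I expect to be the main obstacle is precisely this last identification — controlling the entropy of the orthocomplement of $\widetilde{P}$'s range. It is plausible that $\mathrm{ran}\,\widetilde{P}$ sits inside $L^{2}(\Pi)$ but is strictly smaller, i.e. that $\Pi$ "sees" zero-entropy phenomena not of purely first-chaos origin; ruling this out requires knowing that the entropy of $T_{*}^{g}$ restricted to the sub-$\sigma$-algebra generated by the higher chaoses over a positive-entropy first-chaos part is positive, which is the Poisson-suspension analogue of "a nontrivial factor of a CPE system is CPE" and should follow from the relative disjointness results of \cite{GlaThouWei00EntropywoPast} applied along the Fock filtration, together with the Poissonian-joining calculus of \cite{Roy07Infinite}. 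Once $\pi_{\Pi}=\widetilde{P}$ is established, $\widetilde{P}$ is visibly of the form $\widetilde{\Phi}$ with $\Phi=P$ sub-Markov (indeed an orthogonal projection), so Proposition~\ref{pro:Poissonfactor} applies and yields a $T^{g}$-invariant $K\subset X$ and a base factor $\mathcal{B}_{\mid K}$ with $(\mathcal{B}_{\mid K})^{*}=\Pi$, completing the proof.
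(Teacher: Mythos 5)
Your reduction is the right one (and the same as the paper's): by Proposition \ref{pro:Poissonfactor} it suffices to show that the relatively independent self-joining $\mu^{*}\otimes_{\Pi}\mu^{*}$ is Poissonian. But the way you propose to establish this contains a genuine gap, which you yourself flag as ``the main obstacle'' without closing it. Your plan hinges on identifying $\pi_{\Pi}$ with $\widetilde{P}$ for $P$ the orthogonal projection of the first chaos $\mathfrak{C}\cong L^{2}(\mathcal{A})$ onto a ``zero-entropy part'' $\mathfrak{C}_{0}$. This object is not well defined: zero entropy is not a spectral property (even for $G=\mathbb{Z}$, zero entropy does not mean singular spectrum), and an invariant subspace of $L^{2}(\mathcal{A})$ does not in general correspond to any factor sub-$\sigma$-algebra of $\mathcal{A}$. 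Worse, even granting some definition of $\mathfrak{C}_{0}$, the orthogonal projection onto a spectrally or entropy-defined invariant subspace has no reason to be positivity preserving, i.e.\ sub-Markov; but positivity of $\Phi$ is exactly the content of the Poissonian-joining characterization and is what Proposition \ref{pro:Poissonfactor} needs --- you cannot take it as an input. Likewise your assertion that $\pi_{\Pi}$ ``respects the chaos grading'' is essentially equivalent to $\Pi$ being Poissonian, i.e.\ to the statement being proved, so the argument is circular at the point where it matters. The claims about higher chaoses (``tensor powers of a positive-entropy factor have positive entropy'', ``the sub-$\sigma$-algebra generated by $\mathcal{E}_{Pf}$ has zero entropy'') are left as hopes attached to \cite{GlaThouWei00EntropywoPast}, not proofs.

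The paper avoids any explicit identification of $\pi_{\Pi}$, and this is the idea missing from your proposal: infinite divisibility of the intensity. Write $\mu=\alpha\mu+\beta\mu$ and use the factor map $\varphi(\nu_{1},\nu_{2})=\nu_{1}+\nu_{2}$ from the product of the suspensions over $\alpha\mu$ and $\beta\mu$ onto the suspension over $\mu$. The Pinsker algebra of the product is $\Pi_{\alpha}\otimes\Pi_{\beta}$ (Theorem 4 of \cite{GlaThouWei00EntropywoPast}), and Thouvenot's relative disjointness (Theorem 1 there) of the cpe extension $\mathcal{B}=\varphi^{-1}\mathcal{A}^{*}$ and the zero-entropy extension $\Pi_{\alpha}\otimes\Pi_{\beta}$ over $\widetilde{\Pi_{\alpha+\beta}}$ yields $\mathbb{E}[\,\cdot\mid\widetilde{\Pi_{\alpha+\beta}}\,]=\mathbb{E}[\mathbb{E}[\,\cdot\mid\mathcal{B}]\mid\Pi_{\alpha}\otimes\Pi_{\beta}]$ on $\mathcal{B}$; pushing the product of the two relatively independent joinings through $\varphi\times\varphi$ then gives $\left(\left(\alpha\mu\right)^{*}\otimes_{\Pi_{\alpha}}\left(\alpha\mu\right)^{*}\right)*\left(\left(\beta\mu\right)^{*}\otimes_{\Pi_{\beta}}\left(\beta\mu\right)^{*}\right)=\mu^{*}\otimes_{\Pi}\mu^{*}$, and iterating with $\alpha=\beta=\tfrac{1}{k}$ shows $\mu^{*}\otimes_{\Pi}\mu^{*}$ is infinitely divisible, hence Poissonian --- no description of $\Phi$, no positivity to check by hand. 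So where you use relative disjointness to try to pin down the range of $\pi_{\Pi}$ inside the Fock grading, the paper uses it only to show the Pinsker joining factorizes under convolution; that shift is what makes the proof go through, and without it your outline does not yield a proof.
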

\begin{proof}
Take $\alpha,\beta>0$ and consider the direct product \[
\left(X^{*}\times X^{*},\mathcal{A}^{*}\otimes\mathcal{A}^{*},\left(\alpha\mu\right)^{*}\otimes\left(\beta\mu\right)^{*},T_{*}^{g}\times T_{*}^{g}\right)\]
 of Poisson suspensions. Thanks to the classical formula $\left(\alpha\mu\right)^{*}*\left(\beta\mu\right)^{*}=\left(\alpha+\beta\right)\mu^{*}$,
the map $\varphi:\,\left(\nu_{1},\nu_{2}\right)\mapsto\nu_{1}+\nu_{2}$
induces a factor map from \[
\left(X^{*}\times,X^{*}\mathcal{A}^{*}\otimes\mathcal{A}^{*},\left(\alpha\mu\right)^{*}\otimes\left(\beta\mu\right)^{*},T_{*}^{g}\times T_{*}^{g}\right)\]
 to \[
\left(X^{*},\mathcal{A}^{*},\left(\left(\alpha+\beta\right)\mu\right)^{*},T_{*}^{g}\right).\]
Denote by $\Pi_{\alpha}$ and $\Pi_{\beta}$ and $\Pi_{\alpha+\beta}$
the Pinsker algebra of the systems $\left(X^{*},\mathcal{A}^{*},\left(\alpha\mu\right)^{*},T_{*}^{g}\right)$,
$\left(X^{*},\mathcal{A}^{*},\left(\beta\mu\right)^{*},T_{*}^{g}\right)$
and $\left(X^{*},\mathcal{A}^{*},\left(\left(\alpha+\beta\right)\mu\right)^{*},T_{*}^{g}\right)$
respectively and set $\mathcal{B}:=\varphi^{-1}\mathcal{A}^{*}$ and
$\widetilde{\Pi_{\alpha+\beta}}:=\varphi^{-1}\Pi_{\alpha+\beta}$.
Thanks to a classical result (generalized to countable amenable group
actions in Theorem 4 in \cite{GlaThouWei00EntropywoPast}), the Pinsker
algebra of the product $\left(X^{*}\times X^{*},\mathcal{A}^{*}\otimes\mathcal{A}^{*},\left(\alpha\mu\right)^{*}\otimes\left(\beta\mu\right)^{*},T_{*}^{g}\times T_{*}^{g}\right)$
is $\Pi_{\alpha}\otimes\Pi_{\beta}$. The extension $\mathcal{B}\to\widetilde{\Pi_{\alpha+\beta}}$
is a cpe extension and $\Pi_{\alpha}\otimes\Pi_{\beta}\to\widetilde{\Pi_{\alpha+\beta}}$
is a zero entropy extension. Therefore, by Lemma 3 in \cite{Thouv75Pinsker}
(once again generalized to countable amenable group actions in Theorem
1 in \cite{GlaThouWei00EntropywoPast}), they are relatively disjoint
over $\widetilde{\Pi_{\alpha+\beta}}$. As a consequence, $L^{2}\left(\mathcal{B}\right)\ominus L^{2}\left(\widetilde{\Pi_{\alpha+\beta}}\right)$
and $L^{2}\left(\Pi_{\alpha}\otimes\Pi_{\beta}\right)\ominus L^{2}\left(\widetilde{\Pi_{\alpha+\beta}}\right)$
are orthogonal in $L^{2}\left(\mathcal{A}^{*}\otimes\mathcal{A}^{*}\right)$.
Indeed if $f\in L^{2}\left(\mathcal{B}\right)\ominus L^{2}\left(\widetilde{\Pi_{\alpha+\beta}}\right)$
and $g\in L^{2}\left(\Pi_{\alpha}\otimes\Pi_{\beta}\right)\ominus L^{2}\left(\widetilde{\Pi_{\alpha+\beta}}\right)$,
we have:\begin{eqnarray*}
 &  & \mathbb{E}\left[fg\right]\\
 &  & =\mathbb{E}\left[\mathbb{E}\left[fg\mid\widetilde{\Pi_{\alpha+\beta}}\right]\right]\\
 &  & =\mathbb{E}\left[\mathbb{E}\left[f\mid\widetilde{\Pi_{\alpha+\beta}}\right]\mathbb{E}\left[g\mid\widetilde{\Pi_{\alpha+\beta}}\right]\right]\\
 &  & =0\end{eqnarray*}

We can therefore decompose $L^{2}\left(\mathcal{A}^{*}\otimes\mathcal{A}^{*}\right)$
into the following orthogonal sum:

\begin{eqnarray*}
 &  & L^{2}\left(\mathcal{A}^{*}\otimes\mathcal{A}^{*}\right)\\
 &  & =L^{2}\left(\widetilde{\Pi_{\alpha+\beta}}\right)\oplus\left(L^{2}\left(\mathcal{B}\right)\ominus L^{2}\left(\widetilde{\Pi_{\alpha+\beta}}\right)\right)\oplus\left(L^{2}\left(\Pi_{\alpha}\otimes\Pi_{\beta}\right)\ominus L^{2}\left(\widetilde{\Pi_{\alpha+\beta}}\right)\right)\oplus H\end{eqnarray*}

where $H$ is the orthocomplement of everything else. Now write $f\in L^{2}\left(\mathcal{A}^{*}\otimes\mathcal{A}^{*}\right)$
as $f=f_{1}+f_{2}+f_{3}+f_{4}$ according to the decomposition, we
have:

\[
\mathbb{E}\left[\mathbb{E}\left[f\mid\mathcal{B}\right]\mid\Pi_{\alpha}\otimes\Pi_{\beta}\right]=\mathbb{E}\left[\left(f_{1}+f_{2}\right)\mid\Pi_{\alpha}\otimes\Pi_{\beta}\right]=f_{1}\]

thus $\mathbb{E}\left[f\mid\widetilde{\Pi_{\alpha+\beta}}\right]=\mathbb{E}\left[\mathbb{E}\left[f\mid\mathcal{B}\right]\mid\Pi_{\alpha}\otimes\Pi_{\beta}\right]$.

Now form the relatively independent joining of \[
\left(X^{*}\times,X^{*}\mathcal{A}^{*}\otimes\mathcal{A}^{*},\left(\alpha\mu\right)^{*}\otimes\left(\beta\mu\right)^{*},T_{*}^{g}\times T_{*}^{g}\right)\]
 over $\Pi_{\alpha}\otimes\Pi_{\beta}$ and remark that it is just
the direct product of the relatively independent joinings of $\left(X^{*},\mathcal{A}^{*},\left(\alpha\mu\right)^{*},T_{*}^{g}\right)$
and $\left(X^{*},\mathcal{A}^{*},\left(\beta\mu\right)^{*},T_{*}^{g}\right)$
over their respective Pinsker factors $\Pi_{\alpha}$ and $\Pi_{\beta}$.
Let's now compute the distribution of the self-joining of $\left(X^{*},\mathcal{A}^{*},\mu^{*},T_{*}^{g}\right)$
induced by the preceding joining, through the factor map $\varphi\times\varphi$.
Take $A$ and $B$ in $\mathcal{B}$ and compute:

\begin{eqnarray*}
 &  & \mathbb{E}\left[\mathbb{E}\left[1_{A}\mid\Pi_{\alpha}\otimes\Pi_{\beta}\right]\mathbb{E}\left[1_{B}\mid\Pi_{\alpha}\otimes\Pi_{\beta}\right]\right]\\
 &  & =\mathbb{E}\left[\mathbb{E}\left[\mathbb{E}\left[1_{A}\mid\mathcal{B}\right]\mid\Pi_{\alpha}\otimes\Pi_{\beta}\right]\mathbb{E}\left[\mathbb{E}\left[1_{B}\mid\mathcal{B}\right]\mid\Pi_{\alpha}\otimes\Pi_{\beta}\right]\right]\\
 &  & =\mathbb{E}\left[\mathbb{E}\left[1_{A}\mid\widetilde{\Pi_{\alpha+\beta}}\right]\mathbb{E}\left[1_{B}\mid\widetilde{\Pi_{\alpha+\beta}}\right]\right]\end{eqnarray*}

Therefore, the joining induced is nothing else than the relatively
independent joining over its Pinsker factor. We have just proved that
the image measure of \[
\left(\left(\alpha\mu\right)^{*}\otimes_{\Pi_{\alpha}}\left(\alpha\mu\right)^{*}\right)\otimes\left(\left(\beta\mu\right)^{*}\otimes_{\Pi_{\beta}}\left(\beta\mu\right)^{*}\right)\]
by the sum application $\varphi\times\varphi$ is the measure $\mu^{*}\otimes_{\Pi_{\alpha+\beta}}\mu^{*}$
, that is, we have proved the following formula:

\[
\left(\left(\alpha\mu\right)^{*}\otimes_{\Pi_{\alpha}}\left(\alpha\mu\right)^{*}\right)*\left(\left(\beta\mu\right)^{*}\otimes_{\Pi_{\beta}}\left(\beta\mu\right)^{*}\right)=\mu^{*}\otimes_{\Pi_{\alpha+\beta}}\mu^{*}\]

and we can deduce, for any integer $k$:\[
\left(\left(\frac{1}{k}\mu\right)^{*}\otimes_{\Pi_{\frac{1}{k}}}\left(\frac{1}{k}\mu\right)^{*}\right)^{*k}=\mu^{*}\otimes_{\Pi_{1}}\mu^{*}.\]

This means that the distribution of this relatively independent joining
is infinitely divisible, i.e. it is a Poissonian joining. But according
to Proposition \ref{pro:Poissonfactor}, $\Pi_{1}=\Pi$ is a Poissonian
factor.\end{proof}
\begin{prop}
Let $\left(X,\mathcal{A},\mu,T^{g}\right)$ be a dynamical system.
There exists a (possibly trivial) partition into $T^{g}$-invariant
sets $A$ and $A^{c}$ such that:
\begin{enumerate}
\item $\left(A,\mathcal{A}_{\mid A},\mu_{\mid A},T^{g}\right)$ possesses
a Poisson-Pinsker factor.
\item for any $T^{g}$-invariant set $B\subset A^{c}$ of positive measure,
$\left(B,\mathcal{A}_{\mid B},\mu_{\mid B},T^{g}\right)$ has totally
positive Poisson entropy.
\end{enumerate}
\end{prop}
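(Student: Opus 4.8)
The plan is to build the decomposition by an exhaustion argument over Poissonian factors of zero Poisson entropy, using Proposition \ref{prop:Main Result} as the engine that guarantees such factors are well-behaved. First I would observe that the collection of $T^g$-invariant sets $K\subset X$ such that the restricted system $\left(K,\mathcal{A}_{\mid K},\mu_{\mid K},T^g\right)$ admits a \emph{non-trivial} Poisson-Pinsker factor (i.e.\ a factor of zero Poisson entropy) is closed under countable disjoint unions: if $K=\bigsqcup_n K_n$ with each $K_n$ carrying a zero-Poisson-entropy factor $\mathcal{B}_{n}$, then the Poisson suspension over $K$ splits as the independent product of the suspensions over the $K_n$, and the product of the $\mathcal{B}_n^*$ is a zero-entropy factor of that product (countable additivity of entropy under independent products, valid for amenable $G$), hence corresponds to a zero-Poisson-entropy factor of the base over $K$. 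Therefore one can choose $A$ to be (essentially) the maximal such invariant set: take a sequence $K_n$ along which $\mu(K_n)$ approaches the supremum of $\mu$ over all such invariant sets of finite measure, unionize, and then saturate. Set $A$ equal to this maximal set and $A^c$ its complement.

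Next I would verify property (1): by construction $A$ is a countable disjoint union of invariant pieces each carrying a zero-Poisson-entropy factor, so by the additivity argument above the restricted system on $A$ itself possesses a Poisson-Pinsker factor (the join of the pieces, which is maximal among zero-entropy factors by the usual argument that a factor has zero entropy iff it sits inside the Pinsker algebra — and here, crucially, Proposition \ref{prop:Main Result} tells us the Pinsker factor of the suspension is Poissonian, hence of the form $\mathcal{B}_{\mid A}^*$ for an honest base factor $\mathcal{B}_{\mid A}$, which is then the Poisson-Pinsker factor). Then I would verify property (2): let $B\subset A^c$ be $T^g$-invariant of positive measure and suppose, for contradiction, that $\left(B,\ldots\right)$ does \emph{not} have totally positive Poisson entropy. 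By definition of totally positive Poisson entropy, this means there is an invariant $K\subset B$ of positive measure and a factor of the restricted system on $K$ with zero Poisson entropy — equivalently, $K$ carries a non-trivial zero-Poisson-entropy factor (here one should be slightly careful: "not totally positive" could a priori mean the Pinsker factor is trivial, i.e.\ the restricted suspension has completely positive entropy but the empty $\sigma$-algebra is the only zero-entropy "factor"; but in that degenerate case the restricted system has no $\sigma$-finite zero-entropy factor at all, so it genuinely belongs in $A^c$, and this case simply does not arise for $K$ under our hypothesis). Then $K\subset A^c$ would be an invariant set carrying a non-trivial zero-Poisson-entropy factor, so $A\cup K$ would be a strictly larger set in our collection, contradicting maximality of $A$.

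The main obstacle I anticipate is making the "maximality" step rigorous in the infinite-measure setting, where there is no largest invariant set and one must argue with a supremum of measures of finite-measure invariant subsets and a countable exhaustion; one must check the supremum is attained (up to null sets) and that the resulting $A$ genuinely has the countable-disjoint-union structure needed for the additivity argument. A secondary subtlety is the precise handling of the definition of "totally positive Poisson entropy" — in particular distinguishing "has a non-trivial $\sigma$-finite factor of zero Poisson entropy" from "has positive entropy on every factor including the improper ones" — so that the dichotomy is genuinely exhaustive; I would resolve this by noting that the terminology in Section \ref{sec:Background-on-Poisson} reserves "factor" for invariant sub-$\sigma$-algebras on which the measure stays $\sigma$-finite, so "not totally positive Poisson entropy for every invariant $B$" is exactly the statement that some invariant $K$ carries a $\sigma$-finite zero-Poisson-entropy factor, which is what feeds the contradiction. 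Finally, one should remark the partition may be trivial in either direction ($A=X$ when a global Poisson-Pinsker factor exists, recovering the situation of \cite{Jan08entropy}; $A=\emptyset$ when the system already has totally positive Poisson entropy), which the argument handles automatically.
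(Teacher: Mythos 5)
Your overall strategy (an exhaustion over invariant sets carrying a $\sigma$-finite zero-Poisson-entropy factor, then maximality) is genuinely different from the paper's, but as written it has two real gaps. First, the construction of $A$ via ``the supremum of $\mu$ over all such invariant sets of finite measure'' does not work in the infinite-measure setting you are in: for a conservative ergodic infinite measure preserving system there are no invariant sets of positive \emph{finite} measure at all, so your supremum is $0$ and your recipe outputs $A=\emptyset$ even when the system does possess a Poisson-Pinsker factor. This can be repaired (take an equivalent probability measure, or an essential supremum of the family of invariant sets in your collection), but the step as formulated would fail, and the vague ``and then saturate'' cannot substitute for that repair. Moreover, to feed your closure-under-countable-\emph{disjoint}-unions lemma you must disjointify the chosen sets, which silently uses a heredity property --- that an invariant subset of positive measure of a set carrying a $\sigma$-finite zero-Poisson-entropy factor again carries one --- and this is not immediate (the subset need not be measurable with respect to the given factor); it is true, but requires an argument, e.g.\ via the splitting of the Pinsker algebra of a product, which you never supply. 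A second, smaller gap sits in your verification of (1): Proposition \ref{prop:Main Result} applied to the suspension over $A$ gives a Poissonian factor of the form $\left(\mathcal{B}_{\mid K}\right)^{*}$ for some invariant $K\subset A$, not automatically $K=A$; concluding $K=A$ needs the zero-entropy factor you built on all of $A$ together with an extra argument, which you assert rather than prove.

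The paper's proof shows why none of this machinery is needed: apply Proposition \ref{prop:Main Result} once, to the whole system. The Pinsker factor $\Pi$ of $\left(X^{*},\mathcal{A}^{*},\mu^{*},T_{*}^{g}\right)$ is Poissonian, i.e.\ $\Pi=\left(\mathcal{P}_{\mid A}\right)^{*}$ for a \emph{specific} invariant set $A$ and base factor $\mathcal{P}_{\mid A}$ --- the proposition hands you the set $A$, so no maximality argument is required. Then $\mathcal{P}_{\mid A}$ is the Poisson-Pinsker factor of the system on $A$, since any zero-Poisson-entropy factor $\mathcal{C}_{\mid A}$ satisfies $\left(\mathcal{C}_{\mid A}\right)^{*}\subset\Pi=\left(\mathcal{P}_{\mid A}\right)^{*}$, hence $\mathcal{C}_{\mid A}\subset\mathcal{P}_{\mid A}$; and since the suspension splits as the direct product of the suspensions over $A$ and $A^{c}$ while the Pinsker algebra of a product is the product of the Pinsker algebras (\cite{GlaThouWei00EntropywoPast}), the fact that $\Pi$ is carried by the $A$-side forces the suspension over $A^{c}$ to have completely positive entropy, which yields (2) directly (every non-trivial Poissonian factor over an invariant $B\subset A^{c}$ has positive entropy), with no contradiction argument. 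If you want to keep your exhaustion route you must fix the two gaps above; but note that it buys nothing over the paper's one-line choice of $A$.
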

\begin{proof}
If the Pinsker factor of $\left(X^{*},\mathcal{A}^{*},\mu^{*},T_{*}^{g}\right)$
is trivial, the system has complete positive entropy, therefore, for
any $T^{g}$-invariant set $B\subset X$ of positive measure, $\left(B,\mathcal{A}_{\mid B},\mu_{\mid B},T^{g}\right)$
has totally positive Poisson entropy since any factor $\mathcal{B}_{\mid B}$
corresponds to the non-trivial Poissonian factor $\left(\mathcal{B}_{\mid B}\right)^{*}$
on which the Poisson suspension has positive entropy. If the Pinsker
factor $\Pi$ of $\left(X^{*},\mathcal{A}^{*},\mu^{*},T_{*}^{g}\right)$
is not trivial, from Proposition \ref{prop:Main Result}, there exists
a $T^{g}$-invariant set $A\subset X$ of positive measure and a factor,
say $\mathcal{P}_{\mid A}$, of the restricted system $\left(A,\mathcal{A}_{\mid A},\mu_{\mid A},T^{g}\right)$
such that $\left(\mathcal{P}_{\mid A}\right)^{*}=\Pi$. $\mathcal{P}_{\mid A}$
is clearly the Poisson-Pinsker factor of the system $\left(A,\mathcal{A}_{\mid A},\mu_{\mid A},T^{g}\right)$.
Indeed, if $\mathcal{C}_{\mid A}\subset\mathcal{A}_{\mid A}$ is a
factor with zero Poisson entropy, $\left(\mathcal{C}_{\mid A}\right)^{*}\subset\left(\mathcal{P}_{\mid A}\right)^{*}$
as the latter is the Pinsker factor of the suspension, and this implies
$\mathcal{C}_{\mid A}\subset\mathcal{P}_{\mid A}$ (the fact that
a factor $\mathcal{R}$ is the Poisson-Pinsker factor if the associated
Poissonian factor $\mathcal{R}^{*}$ is the Pinsker factor of the
suspension was already observed in \cite{Jan08entropy}).

If $A^{c}$ has positive measure, the Poisson suspension $\left(X^{*},\mathcal{A}^{*},\mu^{*},T_{*}^{g}\right)$
splits into the direct product \[
\left(A^{*}\times\left(A^{c}\right)^{*},\left(\mathcal{A}_{\mid A}\right)^{*}\otimes\left(\mathcal{A}_{\mid A^{c}}\right)^{*},\left(\mu_{\mid A}\right)^{*}\otimes\left(\mu_{\mid A^{c}}\right)^{*},T_{*}^{g}\times T_{*}^{g}\right)\]

which implies that the Pinsker factor $\Pi$ also splits accordingly.
But as $\left(\mathcal{P}_{\mid A}\right)^{*}=\Pi$, the Pinsker factor
is concentrated in the first side of the product, that is, $\left(\left(A^{c}\right)^{*},\left(\mathcal{A}_{\mid A^{c}}\right)^{*},\left(\mu_{\mid A^{c}}\right)^{*},T_{*}^{g}\right)$
has complete positive entropy and we conclude as in the first part
of the proof.
\end{proof}
In the ergodic case, the result takes the following more pleasant
form:
\begin{thm}
Let $\left(X,\mathcal{A},\mu,T^{g}\right)$ be an ergodic infinite
measure preserving system. Either it has totally positive Poisson
entropy, or it possesses a Poisson-Pinsker factor.
\end{thm}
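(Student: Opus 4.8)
The plan is to read the theorem off from the preceding proposition, the only additional input being that ergodicity forces the invariant partition $\{A,A^{c}\}$ produced there to be trivial. First I would apply the preceding proposition to $(X,\mathcal{A},\mu,T^{g})$, obtaining $T^{g}$-invariant sets $A$ and $A^{c}$ satisfying clauses (1) and (2). Since $A$ and $A^{c}$ are disjoint $T^{g}$-invariant sets whose union carries the infinite measure $\mu$, they cannot both be $\mu$-null; by ergodicity they cannot both have positive measure; hence exactly one of them is conull and the other is $\mu$-null. This already yields the claimed dichotomy, and the two alternatives below will turn out to be mutually exclusive.

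If $\mu(A^{c})=0$, then $(X,\mathcal{A},\mu,T^{g})$ agrees mod $\mu$ with its restriction $(A,\mathcal{A}_{\mid A},\mu_{\mid A},T^{g})$, so clause (1) directly furnishes a Poisson-Pinsker factor. If instead $\mu(A)=0$, then $B:=A^{c}$ is a $T^{g}$-invariant subset of $A^{c}$ of positive (in fact infinite) measure, and the restricted system $(B,\mathcal{A}_{\mid B},\mu_{\mid B},T^{g})$ likewise agrees mod $\mu$ with $(X,\mathcal{A},\mu,T^{g})$; so clause (2), applied to this $B$, says exactly that $(X,\mathcal{A},\mu,T^{g})$ has totally positive Poisson entropy.

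For completeness I would record that the two cases cannot overlap: a system with totally positive Poisson entropy has, by definition (take the invariant set $K=X$), positive Poisson entropy on every one of its factors, whereas a Poisson-Pinsker factor is by construction a factor — in this paper's sense, hence distinct from the trivial $\sigma$-algebra — with zero Poisson entropy. I do not expect any real obstacle here: all the weight is carried by Proposition \ref{prop:Main Result} and by the preceding proposition, and the only points deserving a line of justification are the harmless identification of the system with a conull restriction of itself and the verification that $B=A^{c}$ is an admissible choice in clause (2).
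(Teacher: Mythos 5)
Your argument is correct and is exactly what the paper intends: the theorem is stated without proof as the ergodic specialization of the preceding proposition, and your derivation (ergodicity forces the invariant set $A$ or its complement to be $\mu$-null, then clause (1) or clause (2) applies to the conull piece) is the intended reading. The remarks on mutual exclusivity and on identifying the system with its conull restriction are harmless additions, not deviations.
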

As in the $\mathbb{Z}$-action case, we can observe the behaviour
of Poisson entropy with respect to joinings.
\begin{prop}
Zero Poisson entropy is stable under taking joinings. Totally positive
entropy systems are strongly disjoint from systems possessing a Poisson-Pinsker
factor.\end{prop}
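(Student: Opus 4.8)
The plan is to read off both assertions from the structure theorem just established (Proposition~\ref{prop:Main Result}), the Fock space picture of Poissonian factors recalled in Section~\ref{sub:Fock-space-structure}, and the disjointness theory for probability preserving actions of countable amenable groups already used above (Furstenberg's theorem — a completely positive entropy factor is independent of any zero entropy factor of the same system — and Thouvenot's relative disjointness, both available through \cite{GlaThouWei00EntropywoPast}). In each case one transfers the problem to the Poisson suspension, exploiting that the two coordinate sub-$\sigma$-algebras of a product base are Poissonian factors with prescribed Poisson entropy.

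For the first assertion, let $\lambda$ be a joining of $(X_1,\mathcal A_1,\mu_1,T_1^g)$ and $(X_2,\mathcal A_2,\mu_2,T_2^g)$, both of zero Poisson entropy, and work inside the Poisson suspension of $(X_1\times X_2,\mathcal A_1\otimes\mathcal A_2,\lambda,T_1^g\times T_2^g)$. The coordinate factors $\mathcal F_1:=\mathcal A_1\otimes\{X_2,\emptyset\}$ and $\mathcal F_2:=\{X_1,\emptyset\}\otimes\mathcal A_2$ are factors of $(X_1\times X_2,\lambda)$ isomorphic to $(X_1,\mu_1,T_1^g)$ and $(X_2,\mu_2,T_2^g)$, so the Poissonian factors $\mathcal F_1^*$ and $\mathcal F_2^*$ have zero entropy and hence lie in the Pinsker factor $\Pi$ of the suspension. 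I would then argue by contradiction: if the suspension had positive entropy and $\Pi$ were the trivial Poissonian factor, then $(X_1\times X_2,\lambda)$ would have totally positive Poisson entropy, contradicting that its factor $\mathcal F_1\cong(X_1,\mu_1)$ has zero Poisson entropy; while if $\Pi=(\mathcal B_{\mid K})^*$ is non-trivial, the inclusion $\mathcal F_1^*\subset\Pi$ already forces $K=X_1\times X_2$ (up to null sets), since every $N_{A\times X_2}$ must be $\sigma\{N_C:C\in\mathcal B_{\mid K}\}$-measurable; inspecting the first chaos then yields $A\times X_2\in\mathcal B$ and $X_1\times B\in\mathcal B$ for all $A,B$, so $\mathcal B\supset\sigma\{A\times X_2,\ X_1\times B\}=\mathcal A_1\otimes\mathcal A_2$ and $\Pi$ is improper — absurd. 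Hence $(X_1\times X_2,\lambda)$ has zero Poisson entropy.

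For the second assertion I read ``strongly disjoint'' as the absence of any joining between the two systems (in infinite measure there is no product joining, so this is the natural strong form of disjointness); in any case the argument below shows a joining is impossible, which yields every reasonable version of the statement. Suppose $\lambda$ is a joining of a system $(X_1,\mu_1,T_1^g)$ with totally positive Poisson entropy and a system $(X_2,\mu_2,T_2^g)$ possessing a Poisson--Pinsker factor $\mathcal P_2$. Inside the Poisson suspension of $(X_1\times X_2,\lambda)$ the Poissonian factor $\mathcal F_1^*=\sigma\{N_{A\times X_2}:A\in\mathcal A_1\}$ is, being the Poisson suspension of $(X_1,\mu_1)$, a completely positive entropy factor, whereas $\mathcal P_2^*=\sigma\{N_{X_1\times C}:C\in\mathcal P_2\}$ is a non-trivial zero entropy factor; by Furstenberg's theorem they are independent, so $N_{A\times X_2}\perp N_{X_1\times C}$ for all $A\in\mathcal A_1$ of finite $\mu_1$-measure and $C\in\mathcal P_2$ of finite $\mu_2$-measure. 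For a Poisson point process this means $\lambda\big((A\times X_2)\cap(X_1\times C)\big)=\lambda(A\times C)=0$, and letting $A\uparrow X_1$ gives $\mu_2(C)=\lambda(X_1\times C)=0$ — impossible. Hence no such joining exists.

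The routine ingredients — the isomorphisms $\mathcal F_i\cong(X_i,\mu_i)$, the identity $(\mathcal C)^*=\,$Poisson suspension of $\mathcal C$ (so a zero-entropy base factor gives a zero entropy Poissonian sub-$\sigma$-algebra), and the Poisson process facts $N_E\perp N_F\Leftrightarrow\lambda(E\cap F)=0$ — I will only indicate. The delicate point in both proofs is the manipulation of Poissonian factors via the Fock space: one needs the order-injectivity $\mathcal C_1^*\subset\mathcal C_2^*\Rightarrow\mathcal C_1\subset\mathcal C_2$ (read off on the first chaos), and, crucially, one must exclude the possibility that the Pinsker factor $\Pi=(\mathcal B_{\mid K})^*$ is carried by a proper invariant subset $K$ — which is precisely where the ``full support'' of the coordinate factors enters. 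I expect this bookkeeping to be the main obstacle; granting it, both statements drop out of Proposition~\ref{prop:Main Result} together with the amenable entropy and disjointness theorems cited above.
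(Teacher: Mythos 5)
Your argument is correct, but it is worth saying how it sits relative to the paper, whose ``proof'' here is only a pointer: the author declares the first assertion obvious and refers to the $\mathbb{Z}$-action argument in the cited reference for the second. For the second assertion you have in effect reconstructed that argument in the amenable setting: pass to the Poisson suspension of a hypothetical joining $\lambda$, observe that the coordinate Poissonian factor coming from the totally-positive-entropy system is a cpe factor (this uses Proposition \ref{prop:Main Result}, or the dichotomy, to convert totally positive Poisson entropy into triviality of the Pinsker factor of that suspension), that the Poissonian factor over the Poisson--Pinsker factor of the other system has zero entropy, invoke disjointness of cpe from zero entropy (available for countable amenable groups via \cite{GlaThouWei00EntropywoPast}) to get independence, and then kill $\lambda$ on the first chaos via $\mathrm{Cov}(N_{E},N_{F})=\lambda(E\cap F)$ together with $\sigma$-finiteness of the Poisson--Pinsker factor; your reading of ``strongly disjoint'' as ``no joining whatsoever'' is the intended one, and your argument gives exactly that. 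For the first assertion, where the paper offers nothing, your route through Proposition \ref{prop:Main Result} is sound and arguably the honest one: the na\"{\i}ve subadditivity argument is unavailable because $\mathcal{F}_{1}^{*}\vee\mathcal{F}_{2}^{*}$ need not generate $(\mathcal{A}_{1}\otimes\mathcal{A}_{2})^{*}$ (the ``pairing'' of points is lost), and you correctly sidestep this by descending to the base: $\mathcal{F}_{i}^{*}\subset\Pi=(\mathcal{B}_{\mid K})^{*}$ forces $K$ to be full and $\mathcal{F}_{i}\subset\mathcal{B}$, whence $\mathcal{B}=\mathcal{A}_{1}\otimes\mathcal{A}_{2}$ and $\Pi$ is everything. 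The ingredients you defer --- that $K$ must carry all of $\lambda$ (because $N_{(A\times X_{2})\cap K^{c}}$ is independent of $(\mathcal{B}_{\mid K})^{*}$, so must be a.s.\ constant), the compatibility of the chaos decomposition of a Poissonian factor with the ambient one, and the resulting order-reflection $\mathcal{C}_{1}^{*}\subset\mathcal{C}_{2}^{*}\Rightarrow\mathcal{C}_{1}\subset\mathcal{C}_{2}$ --- are genuinely needed but standard, and the last of them is already used without comment in the paper's proof of the dichotomy proposition, so nothing essential is missing.
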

\begin{proof}
The first statement is obvious and the proof of the second is identical
to the $\mathbb{Z}$-action case which can be found in \cite{Jan08entropy}.
\end{proof}

\section{Spectral properties}

We first recall that a system $\left(X,\mathcal{A},\mu,T^{g}\right)$
is of \emph{zero type} if for any measurable sets $A$ and $B$ in
$\mathcal{A}$ of finite measure, $\mu\left(A\cap T^{g}B\right)$
tends to zero as $g$ tends to infinity.
\begin{prop}
If $\left(X,\mathcal{A},\mu,T^{g}\right)$ has totally positive entropy,
then it is of zero type.\end{prop}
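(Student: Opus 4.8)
The plan is to transfer the zero-type property from the Poisson suspension back to the base, using the Fock space structure together with the hypothesis that totally positive entropy forces the suspension to have complete positive entropy (indeed, totally positive Poisson entropy implies the Pinsker factor of the suspension is trivial, by the argument in the previous propositions). First I would recall that a finite measure preserving system with completely positive entropy is mixing, in particular mildly mixing and certainly has no nonconstant eigenfunctions; more precisely, $U_{T_*^g}$ restricted to $L^2_0(\mathcal{A}^*)$ has no invariant vectors and, being cpe, the correlations $\langle U_{T_*^g} F, F\rangle$ tend to $0$ along the group in the appropriate Følner sense for every $F \perp 1$. What I actually need is a statement about a single first-chaos vector: for $A \in \mathcal{A}$ of finite measure, consider $F_A := N_A - \mu(A) \in \mathfrak{C} \subset L^2(\mathcal{A}^*)$.

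The key computation is the identity $\langle U_{T_*^g} F_A, F_B\rangle_{L^2(\mathcal{A}^*)} = \langle U_{T^g} 1_A, 1_B\rangle_{L^2(\mathcal{A})} = \mu(T^{-g}A \cap B) = \mu(A \cap T^g B)$, which is just the fact that the first chaos $\mathfrak{C}$ is the $U_{T^g}$-invariant copy of $L^2(\mathcal{A})$ inside the Fock space (see Subsection \ref{sub:Fock-space-structure}). So $\mu(A \cap T^g B)$ is literally a correlation coefficient of the suspension against the fixed vector $F_B \perp 1$. Then I would invoke the fact that complete positive entropy of an amenable group action implies mixing of all orders, hence in particular that $\langle U_{T_*^g} F, F\rangle \to 0$ as $g \to \infty$ (along any sequence leaving every finite set) for all $F \in L^2_0(\mathcal{A}^*)$ — this is the standard fact that cpe $\Rightarrow$ mixing, valid for countable amenable groups via the entropy theory references already cited (\cite{GlaThouWei00EntropywoPast}, \cite{DanRud07Condent}); applying the Cauchy--Schwarz-type bound $|\mu(A\cap T^g B)| = |\langle U_{T_*^g}F_A, F_B\rangle| \le \|U_{T_*^g}F_A - \langle U_{T_*^g}F_A,1\rangle\| \cdot \|F_B\|$ with the first factor's correlation with $F_B$ tending to $0$ gives the conclusion.

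A cleaner way to organize the last step, avoiding any fuss about "$g \to \infty$" for general amenable groups: reduce to a rigidity dichotomy. If $\mu(A \cap T^g B)$ does not tend to $0$, then there is a sequence $g_n \to \infty$ and $\varepsilon > 0$ with $\langle U_{T_*^{g_n}} F_A, F_B \rangle \ge \varepsilon$ for all $n$; passing to the weak operator topology, a subsequence of $U_{T_*^{g_n}}$ converges to a (sub-)Markov operator $Q$ commuting with the action, with $\langle Q F_A, F_B\rangle \ge \varepsilon > 0$, so $Q$ restricted to $L^2_0$ is nonzero. But an operator obtained as a weak limit of powers of a cpe transformation, when restricted to the orthocomplement of the constants, must be $0$ (this is precisely mixing, equivalently the absence of any nontrivial "asymptotic" correlation structure in a cpe system), a contradiction. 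I'd phrase this through the spectral/entropy input rather than reprove it.

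The main obstacle, and the only place real care is needed, is pinning down the precise sense in which "completely positive entropy implies mixing" holds for a countable amenable group $G$ and citing it correctly, since for non-$\mathbb{Z}$ groups "mixing" means $\mu(C \cap T^g D) \to \mu(C)\mu(D)$ as $g$ exits every finite subset of $G$, and one must make sure the cpe $\Rightarrow$ mixing implication (which does hold for amenable groups, e.g. via the fact that a cpe system is disjoint from every zero-entropy system and in particular from every compact/rigid system) is applied to the right object. Once that is in hand, everything else is the one-line Fock space identity $\langle U_{T_*^g}(N_A - \mu(A)), N_B - \mu(B)\rangle = \mu(A \cap T^g B)$ plus the observation $N_A - \mu(A) \perp 1$, and the proof is complete. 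One should also note at the outset that totally positive Poisson entropy of the base gives cpe of the suspension: any nontrivial factor of the suspension of the form considered earlier would contradict positivity on some Poissonian factor, so the Pinsker factor $\Pi$ is trivial.
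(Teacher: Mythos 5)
Your proposal is correct and follows essentially the same route as the paper: reduce totally positive Poisson entropy to triviality of the Pinsker factor of the suspension, invoke cpe $\Rightarrow$ mixing for amenable group actions, and transfer mixing to the base via the first-chaos isometry $\left\langle U_{T_{*}^{g}}\left(N_{A}-\mu\left(A\right)\right),N_{B}-\mu\left(B\right)\right\rangle =\mu\left(A\cap T^{g}B\right)$, which is exactly the paper's isometry formula. The extra discussion of weak operator limits is an alternative packaging of the same mixing input and is not needed.
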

\begin{proof}
From Proposition \ref{prop:Main Result} $\left(X^{*},\mathcal{A}^{*},\mu^{*},T_{*}^{g}\right)$
has complete positive entropy and is therefore mixing. But, thanks
to the classical isometry formula \[
\mu\left(A\cap B\right)=\mathbb{E}_{\mu^{*}}\left[\left(N_{A}-\mu\left(A\right)\right)\left(N_{B}-\mu\left(B\right)\right)\right]\]

we have

\begin{eqnarray*}
 &  & \mu\left(A\cap T^{g}B\right)\\
 &  & =\mathbb{E}_{\mu^{*}}\left[\left(N_{A}-\mu\left(A\right)\right)\left(N_{T^{g}B}-\mu\left(T^{g}B\right)\right)\right]\\
 &  & =\mathbb{E}_{\mu^{*}}\left[\left(N_{A}-\mu\left(A\right)\right)\left(N_{B}-\mu\left(B\right)\right)\circ T_{*}^{g}\right]\end{eqnarray*}

which goes to zero as $g$ tends to infinity.\end{proof}
\begin{prop}
\label{pro:Spectrum}If $G$ is abelian and $\left(X^{*},\mathcal{A}^{*},\mu^{*},T_{*}^{g}\right)$
has positive entropy, then it has absolutely continuous spectrum on
$\mathfrak{C}\cap\left(L^{2}\left(\Pi\right)^{\perp}\right)$.\end{prop}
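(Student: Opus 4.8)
The plan is to reduce the assertion, via the Poissonian description of the Pinsker factor, to a standard spectral fact about positive-entropy amenable actions. By Proposition~\ref{prop:Main Result} we may write $\Pi=\left(\mathcal{P}_{\mid A}\right)^{*}$ for some $T^{g}$-invariant set $A$ and some factor $\mathcal{P}_{\mid A}\subset\mathcal{A}_{\mid A}$ (with the convention that $L^{2}\left(\mathcal{P}_{\mid A}\right)$ is the zero subspace when $\Pi$ is the trivial factor, i.e.\ when the suspension has complete positive entropy). Through the Fock identification, $L^{2}\left(\Pi\right)$ is the sub-Fock space $\bigoplus_{n\geq0}L^{2}\left(\mathcal{P}_{\mid A}\right)^{\odot n}$ of $\bigoplus_{n\geq0}L^{2}\left(\mathcal{A}\right)^{\odot n}$, and this inclusion respects the chaos grading; hence $L^{2}\left(\Pi\right)\cap\mathfrak{C}$ is exactly the first-chaos copy of $L^{2}\left(\mathcal{P}_{\mid A}\right)$ inside $\mathfrak{C}\simeq L^{2}\left(\mathcal{A}\right)$. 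Because the two gradings are compatible, an element of $\mathfrak{C}$ lies in $L^{2}\left(\Pi\right)^{\perp}$ if and only if its first-chaos component does, which yields the identity of $U_{T_{*}^{g}}$-invariant subspaces
\[
\mathfrak{C}\cap\left(L^{2}\left(\Pi\right)^{\perp}\right)=L^{2}\left(\mathcal{A}\right)\ominus L^{2}\left(\mathcal{P}_{\mid A}\right),
\]
with $U_{T_{*}^{g}}$ acting there as $U_{T^{g}}$.

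It is then enough to know that $U_{T_{*}^{g}}$ already has absolutely continuous maximal spectral type on the larger invariant subspace $L^{2}\left(\mathcal{A}^{*}\right)\ominus L^{2}\left(\Pi\right)$, since maximal spectral types only decrease under restriction to an invariant subspace. Now $\left(X^{*},\mathcal{A}^{*},\mu^{*},T_{*}^{g}\right)$ is a probability preserving action of the countable amenable group $G$ whose Pinsker factor is exactly $\Pi$, so it is relatively cpe over $\Pi$ (Rokhlin--Sinai for $G=\mathbb{Z}$, and \cite{GlaThouWei00EntropywoPast} in general). The spectral theory of completely positive entropy actions of countable amenable groups, in the form relative to the Pinsker factor (\cite{DooGol02specamenable}, \cite{DanRud07Condent}; for $G=\mathbb{Z}$ this is classical, cf.\ \cite{Thouv75Pinsker}), then gives that the maximal spectral type of $U_{T_{*}^{g}}$ on $L^{2}\left(\mathcal{A}^{*}\right)\ominus L^{2}\left(\Pi\right)$ is absolutely continuous, and Lebesgue with countable multiplicity when $G=\mathbb{Z}$. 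Restricting to $\mathfrak{C}\cap\left(L^{2}\left(\Pi\right)^{\perp}\right)$ proves the proposition; running the same argument on each chaos (the maximal spectral type of $L^{2}\left(\mathcal{A}\right)^{\odot n}\ominus L^{2}\left(\mathcal{P}_{\mid A}\right)^{\odot n}$ is a convolution one of whose factors is the absolutely continuous measure just obtained, hence is itself absolutely continuous) would give the sharper statement on all of $L^{2}\left(\Pi\right)^{\perp}$ announced in the abstract.

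The grading compatibility and the monotonicity of maximal spectral types under restriction to invariant subspaces are routine, and I would only indicate them. The one delicate point is the second paragraph: what is really used is the \emph{relative} spectral statement (absolute continuity on $L^{2}\ominus L^{2}\left(\Pi\right)$), not merely the case of a trivial Pinsker factor, so the argument hinges on having the spectral theory of cpe amenable actions available in relativized form. If one wishes to lean only on the non-relative statement, one first peels off, using the product decomposition of Poisson suspensions, the invariant piece on which the suspension is genuinely cpe (handled directly by \cite{DooGol02specamenable}), and treats the complementary piece, which carries the Poisson--Pinsker factor, by the relative Rokhlin--Sinai theorem and its amenable extension.
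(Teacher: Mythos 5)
Your proposal is correct and follows essentially the same route as the paper: use Proposition \ref{prop:Main Result} to see that $\Pi$ is Poissonian, so that its $L^{2}$-space is a sub-Fock space compatible with the chaos decomposition and $\mathfrak{C}$ splits as $\left(L^{2}\left(\Pi\right)\cap\mathfrak{C}\right)\oplus\left(L^{2}\left(\Pi\right)^{\perp}\cap\mathfrak{C}\right)$, then invoke the theorem (from \cite{DooGol02specamenable}, relative to the Pinsker factor) that the maximal spectral type on $L^{2}\left(\mathcal{A}^{*}\right)\ominus L^{2}\left(\Pi\right)$ is Lebesgue and restrict to the invariant subspace $\mathfrak{C}\cap\left(L^{2}\left(\Pi\right)^{\perp}\right)$. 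Your explicit identification of this subspace with $L^{2}\left(\mathcal{A}\right)\ominus L^{2}\left(\mathcal{P}_{\mid A}\right)$ and the remark on higher chaoses are consistent elaborations of the same argument, so there is nothing to correct.
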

\begin{proof}
Since, thanks to Proposition \ref{prop:Main Result}, $\Pi$ has the
structure of a Poissonian factor, its associated $L^{2}$-space is
a Fock space compatible with the underlying one, in particular $\mathfrak{C}=\left(L^{2}\left(\Pi\right)^{\perp}\cap\mathfrak{C}\right)\begin{array}[b]{c}
\perp\\
\oplus\end{array}\left(L^{2}\left(\Pi\right)\cap\mathfrak{C}\right)$. Since, by assumption $L^{2}\left(\Pi\right)\neq L^{2}\left(\mathcal{A}^{*}\right)$,
$\mathfrak{C}\not\subset L^{2}\left(\Pi\right)$ as $\sigma\left(\mathfrak{C}\right)=\mathcal{A}^{*}$.
Therefore $\left(L^{2}\left(\Pi\right)^{\perp}\cap\mathfrak{C}\right)$
is not empty and we get the result, as the maximal spectral type on
$L^{2}\left(\Pi\right)^{\perp}$ is Lebesgue by a Theorem proved in
\cite{DooGol02specamenable} and independently by Thouvenot (unpublished).
\end{proof}
As a direct application, we get:
\begin{prop}
\label{pro:SpectrumInfinitemeasure}Assume $G$ is abelian. If $\left(X,\mathcal{A},\mu,T^{g}\right)$
has totally positive Poisson entropy, its maximal spectral type is
absolutely continuous.

If $\left(X,\mathcal{A},\mu,T^{g}\right)$ has positive Poisson entropy
and possesses a Poisson-Pinsker factor, then the maximal spectral
type on the orthocomplement of the Poisson-Pinsker factor is absolutely
continuous.\end{prop}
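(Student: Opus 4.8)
The plan is to transfer the spectral information on the Poisson suspension, obtained in Proposition \ref{pro:Spectrum}, down to the base via the Fock space identification of Section \ref{sub:Fock-space-structure}. Recall that in the abelian case, if $\sigma$ denotes the maximal spectral type of $U_{T^g}$ on $L^2(\mathcal{A})$, then the reduced maximal spectral type of $U_{T^g_*}$ is $\sum_{k\geq1}\frac{1}{k!}\sigma^{*k}$, since the $k$-th chaos carries spectral type $\sigma^{*k}$ and $\mathfrak{C}$ carries exactly $\sigma$. So the spectral type of $U_{T^g}$ on $L^2(\mathcal{A})$ coincides (up to equivalence) with the spectral type of $U_{T^g_*}$ restricted to the first chaos $\mathfrak{C}$.

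First I would treat the totally positive entropy case. Here, by Proposition \ref{prop:Main Result}, the Pinsker factor $\Pi$ is a Poissonian factor; but totally positive Poisson entropy forces $\Pi$ to be trivial (this is exactly the dichotomy: a non-trivial $\Pi=\mathcal{P}_{\mid A}^*$ would give a zero-entropy Poissonian factor, contradicting total positivity), so the suspension has complete positive entropy. By the theorem of Doo--Golodets (and Thouvenot), the maximal spectral type of $U_{T^g_*}$ on all of $L^2(\mathcal{A}^*)\ominus\mathbb{C}$ is Lebesgue, hence in particular on $\mathfrak{C}$. Restricting the spectral type to $\mathfrak{C}$ and reading it off as the maximal spectral type $\sigma$ on $L^2(\mathcal{A})$, we conclude that $\sigma$ is absolutely continuous.

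Next, the case where the system has positive Poisson entropy and admits a Poisson-Pinsker factor, say $\mathcal{P}_{\mid A}$, with $\mathcal{P}_{\mid A}^*=\Pi$. By Proposition \ref{pro:Spectrum}, $U_{T^g_*}$ has absolutely continuous (indeed Lebesgue) spectrum on $L^2(\Pi)^\perp$, and by the compatibility of the Fock structures established there, $\mathfrak{C}=(\mathfrak{C}\cap L^2(\Pi))\overset{\perp}{\oplus}(\mathfrak{C}\cap L^2(\Pi)^\perp)$, where $\mathfrak{C}\cap L^2(\Pi)$ is precisely the first chaos of the Poissonian factor $\Pi$, i.e.\ the copy of $L^2(\mathcal{P}_{\mid A})$ sitting inside $L^2(\mathcal{A})$. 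Consequently the orthocomplement of the Poisson-Pinsker factor inside $L^2(\mathcal{A})$ maps isometrically and $U_{T^g}$-equivariantly onto $\mathfrak{C}\cap L^2(\Pi)^\perp$, on which the spectrum is Lebesgue; hence the maximal spectral type of $U_{T^g}$ on that orthocomplement is absolutely continuous. The main point to get right, and the only place requiring care, is the identification $\mathfrak{C}\cap L^2(\Pi)=L^2(\mathcal{P}_{\mid A})$ viewed in the first chaos together with the fact that $U_{T^g}$ on $L^2(\mathcal{A})$ restricted to $\mathfrak{C}$ is unitarily equivalent to $U_{T^g_*}|_{\mathfrak{C}}$; both are immediate from Section \ref{sub:Fock-space-structure} and Proposition \ref{pro:Spectrum}, so the argument is short once these identifications are spelled out.
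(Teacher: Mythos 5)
Your proposal is correct and follows essentially the same route as the paper: the paper's proof simply invokes Proposition \ref{pro:Spectrum} together with the unitary, $U_{T^{g}}$-equivariant identification of the first chaos $\mathfrak{C}$ with $L^{2}\left(\mathcal{A}\right)$, which is exactly what you spell out (including the correct identification of $\mathfrak{C}\cap L^{2}\left(\Pi\right)$ with the $L^{2}$-space of the Poisson-Pinsker factor). Your slight detour in the totally positive entropy case --- noting that $\Pi$ must then be trivial and quoting the Dooley--Golodets/Thouvenot Lebesgue spectrum theorem directly --- is just an unfolding of the same argument, since Proposition \ref{pro:Spectrum} with trivial $\Pi$ gives precisely that statement.
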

\begin{proof}
This follows directly from Proposition \ref{pro:Spectrum} and from
the unitary isomorphism between the first chaos of $L^{2}\left(\mathcal{A}^{*}\right)$
and $L^{2}\left(\mathcal{A}\right)$.
\end{proof}
We therefore can deduce the following corollary, well known in the
finite measure case, the proof being almost the same:
\begin{cor}
Assume $\left(X,\mathcal{A},\mu,T^{g}\right)$ is the dynamical system
associated to a square integrable stationary $G$-process $\left\{ X_{g}\right\} _{g\in G}$
where $G$ is a countable abelian group. If the spectral measure of
$X_{e}$ is singular, then $\left(X,\mathcal{A},\mu,T^{g}\right)$
has zero Poisson entropy.

In particular, if $\left(X,\mathcal{A},\mu,T^{g}\right)$ has singular
maximal spectral type, then it has zero Poisson entropy.
\end{cor}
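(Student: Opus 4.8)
The plan is to reduce the corollary to Proposition~\ref{pro:SpectrumInfinitemeasure}. Recall that if $\left(X,\mathcal{A},\mu,T^{g}\right)$ is the dynamical system associated to a square integrable stationary process $\left\{ X_{g}\right\}_{g\in G}$, then $X_{e}$ is a square integrable function, $X_{g}=X_{e}\circ T^{g}$, and the spectral measure of $X_{e}$ (in the sense of Herglotz/Bochner on the dual group $\widehat{G}$) is by definition the measure $\sigma_{X_{e}}$ satisfying $\langle U_{T^{g}}X_{e},X_{e}\rangle=\widehat{\sigma_{X_{e}}}(g)$. The key structural fact I would use is that the cyclic subspace generated by $X_{e}$ under $U_{T^{g}}$ has maximal spectral type $\sigma_{X_{e}}$, so if $\sigma_{X_{e}}$ is singular, that cyclic space carries singular spectrum.

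The main step is then a contrapositive argument. Suppose, for contradiction, that $\left(X,\mathcal{A},\mu,T^{g}\right)$ has \emph{positive} Poisson entropy. There are two cases according to the dichotomy established above. If the system has totally positive Poisson entropy, then by the first part of Proposition~\ref{pro:SpectrumInfinitemeasure} the maximal spectral type is absolutely continuous, so \emph{every} spectral measure, in particular $\sigma_{X_{e}}$, is absolutely continuous --- contradicting singularity. If instead the system has positive Poisson entropy but possesses a Poisson-Pinsker factor $\mathcal{P}$, then by the second part of Proposition~\ref{pro:SpectrumInfinitemeasure} the maximal spectral type on $L^{2}\left(\mathcal{A}\right)\ominus L^{2}\left(\mathcal{P}\right)$ is absolutely continuous. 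Decompose $X_{e}=\mathbb{E}\left[X_{e}\mid\mathcal{P}\right]+\left(X_{e}-\mathbb{E}\left[X_{e}\mid\mathcal{P}\right]\right)$; the first summand lies in $L^{2}\left(\mathcal{P}\right)$ and the second in its orthocomplement, and both pieces are $U_{T^{g}}$-invariant subspaces. Hence $\sigma_{X_{e}}$ is a sum of a measure carried by the spectral type of $\mathcal{P}$ and an absolutely continuous measure. Now $\mathcal{P}$, being a Poisson-Pinsker factor, has zero Poisson entropy; I would invoke that on such a factor the Poisson suspension has zero entropy and hence (for $G=\mathbb{Z}$, and more generally by the amenable analogue) no Lebesgue component --- but more directly, I want the converse of the Corollary on $\mathcal{P}$, which is circular, so instead I should argue as follows: since $\left(X,\mathcal{A},\mu,T^g\right)$ itself has positive Poisson entropy, $X_e$ cannot be $\mathcal{P}$-measurable unless its contribution to the entropy is trivial. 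The cleanest route: if $\sigma_{X_e}$ is singular, then the cyclic space $Z$ generated by $X_e$ has singular maximal spectral type, so $Z$ is orthogonal to any subspace with Lebesgue (a.c.) spectrum; in particular $Z\subset L^2(\mathcal{P})$. Then the factor generated by $X_e$ is contained in $\mathcal{P}$, hence has zero Poisson entropy. This proves the first assertion.

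For the ``in particular'' statement, suppose $\left(X,\mathcal{A},\mu,T^{g}\right)$ has singular maximal spectral type but, for contradiction, positive Poisson entropy. Then one of the two cases of Proposition~\ref{pro:SpectrumInfinitemeasure} applies; in the totally-positive case the maximal spectral type is absolutely continuous, immediately contradicting singularity (and non-triviality of $L^{2}\left(\mathcal{A}\right)$, which holds since $\mu$ is infinite so $\mathfrak{C}\neq\{0\}$). In the Poisson-Pinsker case, the maximal spectral type on $L^{2}\left(\mathcal{A}\right)\ominus L^{2}\left(\mathcal{P}\right)$ is absolutely continuous; but positive Poisson entropy means $\mathcal{P}\neq\mathcal{A}$ (else the suspension would have zero entropy), so this orthocomplement is non-trivial and forces a non-trivial absolutely continuous component in the maximal spectral type, again contradicting singularity. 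Hence the Poisson entropy is zero.

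The step I expect to be the main obstacle is the first assertion's case analysis when a Poisson-Pinsker factor is present: I need to make precise the passage from ``$\sigma_{X_e}$ singular'' to ``the cyclic/factor space generated by $X_e$ sits inside $L^2(\mathcal{P})$'' and conclude zero Poisson entropy for the subsystem generated by $X_e$. This uses that a.c.\ and singular spectral types are mutually orthogonal, that $\mathcal{P}$ exhausts the non-a.c.\ part of the spectrum on the first chaos (from Proposition~\ref{pro:Spectrum}), and that a factor contained in a zero-Poisson-entropy factor has itself zero Poisson entropy (monotonicity of the Pinsker factor, as noted in the proof of the preceding proposition). Everything else is a direct citation of Proposition~\ref{pro:SpectrumInfinitemeasure} together with the elementary fact that an infinite base measure makes the first chaos, and hence $L^2(\mathcal{A})$, non-trivial.
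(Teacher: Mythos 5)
Your argument is correct and is essentially the proof the paper intends: the paper omits it, remarking only that the proof is ``almost the same'' as in the classical finite-measure case, which is exactly your reduction --- singularity of $\sigma_{X_{e}}$ makes the cyclic space of $X_{e}$ orthogonal to the absolutely continuous part supplied by Propositions~\ref{pro:Spectrum} and \ref{pro:SpectrumInfinitemeasure}, so $X_{e}$, hence every $X_{g}$, hence all of $\mathcal{A}$, is measurable with respect to the zero-Poisson-entropy (Poisson-Pinsker) factor, giving zero Poisson entropy, while the ``in particular'' statement follows by the non-triviality of the orthocomplement you point out. The detour you flag about possible circularity is unneeded; your final route (mutually singular spectral types of invariant subspaces force orthogonality, so $X_{e}\in L^{2}\left(\mathcal{P}\right)$) is the intended argument.
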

For $\mathbb{Z}$-action, we can be more precise.
\begin{prop}
If $\left(X,\mathcal{A},\mu,T\right)$ has totally positive Poisson
entropy, its maximal spectral type is Lebesgue countable.

If $\left(X,\mathcal{A},\mu,T\right)$ has positive Poisson entropy
and possesses a Poisson-Pinsker factor, then the maximal spectral
type on the orthocomplement of the Poisson-Pinsker factor is Lebesgue
countable.\end{prop}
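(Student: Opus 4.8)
The plan is to treat both assertions as the $G=\mathbb{Z}$ sharpenings of Proposition \ref{pro:SpectrumInfinitemeasure}, obtained by replacing the spectral input of Proposition \ref{pro:Spectrum} (maximal spectral type Lebesgue on $L^{2}(\Pi)^{\perp}$, valid for all amenable $G$) by the stronger statement available for $\mathbb{Z}$: for a $\mathbb{Z}$-system the orthocomplement of the Pinsker $\sigma$-algebra carries a \emph{countable} Lebesgue spectrum (Rokhlin--Sinai). By Proposition \ref{prop:Main Result}, the Pinsker factor $\Pi$ of $(X^{*},\mathcal{A}^{*},\mu^{*},T_{*})$ is the Poissonian factor $\mathcal{P}^{*}$ --- trivial exactly in the totally positive case --- so $U_{T_{*}}$ has countable Lebesgue spectrum on $L^{2}(\mathcal{A}^{*})\ominus L^{2}(\Pi)$. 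Everything then reduces to pushing this down to the first chaos: one wants $U_{T}$ to have countable Lebesgue spectrum on all of $L^{2}(\mathcal{A})$ in the totally positive case, and on $L^{2}(\mathcal{A})\ominus L^{2}(\mathcal{P})$ when a Poisson-Pinsker factor $\mathcal{P}$ exists.

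For the descent I would combine the Fock structure with the relative $K$-property. Write $\mathfrak{D}\subset\mathfrak{C}$ for $\mathfrak{C}$ itself in the first case, and for the orthocomplement in $\mathfrak{C}$ of the first chaos of $\mathcal{P}^{*}$ in the second; under the first-chaos isomorphism, $\mathfrak{D}$ corresponds to the space on which we want countable Lebesgue spectrum. Compatibility of $\Pi=\mathcal{P}^{*}$ with the Fock grading gives $L^{2}(\mathcal{A}^{*})\ominus L^{2}(\Pi)=\bigoplus_{n\geq1}L^{2}(\Pi)\otimes\mathfrak{D}^{\odot n}$, with $\mathfrak{D}$ embedded as the $n=1$ summand tensored by the constants. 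I would realise the relative Pinsker of $(X^{*},T_{*})$ by a generating increasing filtration whose one-step increments lie in the first chaos (legitimate since $\mathfrak{C}$ generates $\mathcal{A}^{*}$), so that the wandering subspace $\mathcal{W}$ with $L^{2}(\mathcal{A}^{*})\ominus L^{2}(\Pi)=\bigoplus_{k\in\mathbb{Z}}U_{T_{*}}^{k}\mathcal{W}$ can be chosen with $\mathcal{W}\cap\mathfrak{D}$ generating $\mathfrak{D}$ under the shift --- equivalently, so that $U_{T}$ has no singular part on $L^{2}(\mathcal{A})\ominus L^{2}(\mathcal{P})$ and its maximal spectral type there is \emph{equivalent} to Lebesgue with infinite multiplicity. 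Then $\mathfrak{D}=\bigoplus_{k\in\mathbb{Z}}U_{T_{*}}^{k}(\mathcal{W}\cap\mathfrak{D})$ is a bilateral shift of multiplicity $\aleph_{0}$, which is the desired countable Lebesgue spectrum; the Poisson-Pinsker statement then follows by isolating, when $A^{c}$ has positive measure, the complement on which the suspension has completely positive Poisson entropy, and otherwise using the relative structure over $\Pi$ inside $A$.

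The main obstacle is precisely that last step. All one gets for free is absolute continuity (Proposition \ref{pro:SpectrumInfinitemeasure}), and no purely spectral argument promotes it: were the maximal type $\sigma$ of $U_{T}$ on $L^{2}(\mathcal{A})\ominus L^{2}(\mathcal{P})$ only a proper Lebesgue sub-type, its self-convolutions $\sigma^{*n}$ would already be equivalent to Lebesgue for $n$ large (Kneser's theorem on $\mathbb{T}$), so the chaoses $\mathfrak{D}^{\odot n}$ of order $\geq2$ would by themselves provide a countable Lebesgue spectrum for the suspension, entirely compatible with its $K$-property. One therefore has to exploit the triviality of the Poisson tail itself. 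This is the Poissonian analogue of the Newton--Parry theorem --- a Poisson suspension of a $\mathbb{Z}$-system is a $K$-automorphism if and only if $U_{T}$ has countable Lebesgue spectrum --- proved by the same relative-disjointness and filtration arguments already used in Proposition \ref{prop:Main Result}, now carried out at the first-chaos level; granting it, both statements of the proposition are immediate.
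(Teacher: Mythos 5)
Your reduction to the suspension is fine as far as it goes: by Proposition \ref{prop:Main Result} the Pinsker factor of $(X^{*},\mathcal{A}^{*},\mu^{*},T_{*})$ is Poissonian, and Rokhlin--Sinai gives countable Lebesgue spectrum on $L^{2}(\mathcal{A}^{*})\ominus L^{2}(\Pi)$. The gap is precisely the step you flag and then wave through. Nothing justifies choosing the Rokhlin--Sinai wandering subspace compatibly with the Fock grading: the increments of a generating, increasing filtration have no reason to respect the chaos decomposition merely because $\mathfrak{C}$ generates $\mathcal{A}^{*}$, and your own sumset remark shows why no purely spectral descent can work --- the chaoses of order $\geq 2$ could by themselves account for the countable Lebesgue component of the suspension while the first chaos carries only an absolutely continuous type of finite multiplicity. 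At that point you invoke a ``Poissonian analogue of the Newton--Parry theorem,'' asserted to follow ``by the same relative-disjointness and filtration arguments'' as Proposition \ref{prop:Main Result}; but that unproved statement is, modulo Proposition \ref{pro:SpectrumInfinitemeasure}, exactly the proposition you are asked to prove, so the argument is circular at its crucial point.

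For comparison, the paper does not attempt this descent at all: the first statement is obtained by combining the absolute continuity from Proposition \ref{pro:SpectrumInfinitemeasure} with Proposition 10.2 of \cite{Jan08entropy}, which supplies the Lebesgue-countable component for infinite measure preserving $\mathbb{Z}$-systems of positive entropy; the second follows either from an adaptation of that proposition or from Theorem 3.2 of \cite{DanRud07Condent} together with the fact, proved in \cite{Jan08entropy}, that conditional Poisson entropy coincides with conditional Krengel entropy. To make your route work you would have to actually prove your key lemma at the level of the base (or reproduce the cited results), not grant it.
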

\begin{proof}
The first statement follows from Proposition 10.2 in \cite{Jan08entropy},
combined with Proposition \ref{pro:SpectrumInfinitemeasure}. The
second can also be deduced from an adaptation of Proposition 10.2
in \cite{Jan08entropy} but is also a direct application of Theorem
3.2 in \cite{DanRud07Condent} combined with the fact that conditional
Poisson entropy coincides with conditional Krengel entropy as proved
in \cite{Jan08entropy}.
\end{proof}
Examples of totally positive Poisson entropy transformations are given
by shift associated to null recurrent Markov chains (see \cite{Roy07Infinite}).
\begin{cor}
\label{cor:multiplicity}If $\left(X,\mathcal{A},\mu,T\right)$ has
finite multiplicity, then it has zero Poisson entropy.
\end{cor}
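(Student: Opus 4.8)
The plan is to argue by contradiction. Suppose $\left(X,\mathcal{A},\mu,T\right)$ has finite spectral multiplicity on $L^{2}\left(\mathcal{A}\right)$ but positive Poisson entropy; I will produce a nonzero closed $T$-invariant subspace of $L^{2}\left(\mathcal{A}\right)$ on which $U_{T}$ has countable Lebesgue spectrum, which is the sought contradiction.

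First I would split the system using the structural Proposition stated above (the partition into $A$ and $A^{c}$): there is a $T$-invariant partition $X=A\sqcup A^{c}$ such that $\left(A,\mathcal{A}_{\mid A},\mu_{\mid A},T\right)$ possesses a Poisson-Pinsker factor and every $T$-invariant subset of $A^{c}$ of positive measure has totally positive Poisson entropy. Since the Poisson entropy of $\left(X,\mathcal{A},\mu,T\right)$ is positive, one of two cases occurs. If $\mu\left(A^{c}\right)>0$, then (taking $B=A^{c}$) the system $\left(A^{c},\mathcal{A}_{\mid A^{c}},\mu_{\mid A^{c}},T\right)$ has totally positive Poisson entropy, and by the previous Proposition (on countable Lebesgue spectrum for $\mathbb{Z}$-actions) its maximal spectral type on $L^{2}\left(\mathcal{A}_{\mid A^{c}}\right)$ is Lebesgue with countable multiplicity. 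If $\mu\left(A^{c}\right)=0$, then $A=X$ modulo a null set, so $\left(X,\mathcal{A},\mu,T\right)$ itself possesses a Poisson-Pinsker factor $\mathcal{P}$; since the Poisson entropy is positive, $\mathcal{P}$ is a proper factor (otherwise the maximal zero-Poisson-entropy factor would be the whole system, forcing zero entropy), so $L^{2}\left(\mathcal{A}\right)\ominus L^{2}\left(\mathcal{P}\right)\neq\left\{ 0\right\} $, and by the previous Proposition the maximal spectral type of $U_{T}$ on this space is Lebesgue with countable multiplicity. In either case we have a nonzero closed $T$-invariant subspace $V\subseteq L^{2}\left(\mathcal{A}\right)$ carrying countable Lebesgue spectrum.

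To finish, I would invoke the elementary fact that the spectral multiplicity function of $U_{T}$ restricted to an invariant subspace is dominated almost everywhere by the multiplicity function on the whole space: writing $L^{2}\left(\mathcal{A}\right)=V\oplus V^{\perp}$ with both summands invariant, the multiplicity function of $L^{2}\left(\mathcal{A}\right)$ is the sum, in the appropriate sense, of those of $V$ and $V^{\perp}$, hence at least that of $V$. Since $V$ has countably infinite multiplicity on a set of positive spectral measure, so does $L^{2}\left(\mathcal{A}\right)$, contradicting finite multiplicity. Therefore the Poisson entropy of $\left(X,\mathcal{A},\mu,T\right)$ is zero.

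The only genuinely delicate point is the choice of input: mere absolute continuity of the spectrum, as in Proposition~\ref{pro:SpectrumInfinitemeasure}, would not help, since a Lebesgue maximal spectral type is perfectly compatible with finite multiplicity; what is needed is the sharper $\mathbb{Z}$-statement, established just above, that the relevant spectral component is of \emph{countable} Lebesgue type, so that the contradiction with finite multiplicity comes from the multiplicity rather than from the spectral type alone. The remaining steps---monotonicity of spectral multiplicity under restriction to invariant subspaces, and checking that the positive-entropy hypothesis indeed lands us in one of the two cases (in particular that the Poisson-Pinsker factor is proper)---are routine.
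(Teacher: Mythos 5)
Your argument is correct and follows the route the paper intends: the corollary is an immediate consequence of the preceding $\mathbb{Z}$-action proposition, since in either branch of the dichotomy (handled, as you do, via the $A$/$A^{c}$ partition in the non-ergodic case) one gets a nonzero reducing subspace of $L^{2}\left(\mathcal{A}\right)$ with countable Lebesgue spectrum, which is incompatible with finite multiplicity by monotonicity of the multiplicity function on invariant subspaces. Your remark that mere absolute continuity would not suffice, and that the countable Lebesgue statement is the needed input, is exactly the right observation.
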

The conclusion of Corollary \ref{cor:multiplicity} is not true if
$\mu$ is not continuous (think of the shift on $\mathbb{Z}$).

\bibliographystyle{plain}
\bibliography{biblio.bib}

\end{document}